\newtheorem{theorem}{Theorem}
\newtheorem{proposition}{Proposition}
\newtheorem{definition}{Definition}
\newtheorem{lemma}{Lemma}
\title{Approximate Secular Equations for\\the Cubic Regularization Subproblem}
\author{%
  Yihang Gao\\
  Department of Mathematics \\
  The University of Hong Kong\\
  Pokfulam, Hong Kong \\
  \texttt{gaoyh@connect.hku.hk} \\
  \AND
  Man-Chung Yue\\
  Musketeers Foundation Institute of Data Science\\
  The University of Hong Kong\\
  Pokfulam, Hong Kong\\
  \texttt{mcyue@hku.hk}
  \And
  Michael K. Ng \\
  Department of Mathematics \\
  The University of Hong Kong\\
  Pokfulam, Hong Kong \\
  \texttt{mng@maths.hku.hk} \\
}
\begin{document}

\maketitle

\begin{abstract}
The cubic regularization method (CR) is a popular algorithm for unconstrained non-convex optimization. At each iteration, CR solves a cubically regularized quadratic problem, called the cubic regularization subproblem (CRS).
One way to solve the CRS relies on solving the secular equation, whose computational bottleneck lies in the computation of all eigenvalues of the Hessian matrix.
In this paper, we propose and analyze a novel CRS solver based on an approximate secular equation, which requires only some of the Hessian eigenvalues and is therefore much more efficient.
Two approximate secular equations (ASEs) are developed. For both ASEs, we first study the existence and uniqueness of their roots and then establish an upper bound on the gap between the root and that of the standard secular equation. Such an upper bound can in turn be used to bound the distance from the approximate CRS solution based ASEs to the true CRS solution, thus offering a theoretical guarantee for our CRS solver. A desirable feature of our CRS solver is that it requires only matrix-vector multiplication but not matrix inversion, which makes it particularly suitable for high-dimensional applications of unconstrained non-convex optimization, such as low-rank recovery and deep learning. Numerical experiments with synthetic and real data-sets are conducted to investigate the practical performance of the proposed CRS solver. Experimental results show that the proposed solver outperforms two state-of-the-art methods. 
\end{abstract}

\section{Introduction}
The cubic regularization method (CR) is a variant of Newton's method proposed by Griewank~\cite{griewank1981modification}, and later independently by Nesterov and Polyak~\cite{nesterov2006cubic}, and Weiser et al.~\cite{weiser2007affine}. It gained significant attention over the last decade due to its attractive theoretical properties, such as convergence to second-order critical points\cite{nesterov2006cubic} and quadratic convergence rate under mild assumptions~\cite{yue2019quadratic}. 
Each iteration of CR solves a problem of the following form, called the cubic regularization subproblem (CRS):
\begin{equation}
\label{cubic_minimization}
    \min_{\mathbf{x}\in \mathbb{R}^n} f_{\mathbf{A},\mathbf{b},\rho}(\mathbf{x}):= \mathbf{b}^{\mathrm{T}}\mathbf{x}+ \frac{1}{2} \mathbf{x}^{\mathrm{T}}\mathbf{A}\mathbf{x} + \frac{\rho}{3}\|\mathbf{x}\|^3,
\end{equation}
where $\rho > 0$ is the regularization parameter, $\mathbf{b}\in\mathbb{R}^n$,	 and $\mathbf{A}\in\mathbb{R}^{n\times n}$ is a symmetric matrix, not necessarily positive semidefinite.
Many variants and generalizations of CR are developed, including the Adaptive Regularization Using Cubics (ARC) which allows for a dynamic choice of $\rho$ and inexact CRS solutions~\cite{cartis2011adaptive, cartis2011adaptive2}, accelerated CR using momentum~\cite{wang2020cubic} and stochastic CR for solving stochastic optimization~\cite{tripuraneni2018stochastic}. Despite the theoretical success, the practicality of CR and its variants relies critically on the CRS solver, a topic that attracts considerable research recently~\cite{carmon2018analysis, carmon2019gradient, jiang2021accelerated, jiang2022cubic}. The goal of this paper is to develop a novel, efficient CRS solver along with theoretical guarantees.

A popular approach for solving the CRS is via solving the so-called secular equation. We now review this approach. Towards that, we denote by $\lambda_1 \le \cdots \le \lambda_n$ the eigenvalues of $\mathbf{A}$ and by $\mathbf{v}_1,
\cdots, \mathbf{v}_n$ the corresponding eigenvectors. In other words, we have the
eigendecomposition $\mathbf{A} = \sum_{i=1}^n \lambda_i \mathbf{v}_i \mathbf{v}_i^T = \mathbf{V} \mathbf{\Lambda} \mathbf{V}^{\mathrm{T}}$,
where $\mathbf{\Lambda} = \text{diag}(\lambda_1, \dots, \lambda_n)$ and $\mathbf{V}=[\mathbf{v}_1, \cdots, \mathbf{v}_{n}]$. Note that eigenvalues $\lambda_{i}$ are not necessarily positive due to the indefiniteness of the matrix $\mathbf{A}$. Also, we denote the Euclidean norm by $\|\cdot\|$.

\begin{proposition}[\cite{nesterov2006cubic, cartis2011adaptive}]
\label{prop:optimality_conditions}
A vector $\mathbf{x}^{*}$ solves the CRS~\eqref{cubic_minimization} if and only if it satisfies the system
\begin{empheq}[left=\empheqlbrace]{align}
(\mathbf{A}+ \rho \|\mathbf{x}^{*}\| \mathbf{I}) \mathbf{x}^{*} + \mathbf{b} = \mathbf{0}, \label{first_order_condition}\\
\mathbf{A} + \rho \|\mathbf{x}^{*}\| \mathbf{I} \succeq \mathbf{0}.\label{second_order_condition}
\end{empheq}
Moreover, if $\mathbf{A} + \rho \|\mathbf{x}^{*}\| \mathbf{I} \succ \mathbf{0}$, then $\mathbf{x}^{*}$ is the unique solution (and hence a critical point). 
\end{proposition}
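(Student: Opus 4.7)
The plan is to prove the \emph{only if} direction, the \emph{if} direction, and the uniqueness claim separately, with the central tool in both directions being an explicit identity for $f_{\mathbf{A},\mathbf{b},\rho}(\mathbf{y}) - f_{\mathbf{A},\mathbf{b},\rho}(\mathbf{x}^*)$ under \eqref{first_order_condition}.

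For the \emph{only if} direction, I would first establish the existence of a global minimizer $\mathbf{x}^*$ by continuity and coercivity (the cubic term dominates the quadratic as $\|\mathbf{x}\|\to\infty$, regardless of the sign of $\mathbf{A}$). For $\mathbf{x}^*\neq\mathbf{0}$, computing $\nabla f_{\mathbf{A},\mathbf{b},\rho}(\mathbf{x}) = \mathbf{A}\mathbf{x} + \mathbf{b} + \rho\|\mathbf{x}\|\mathbf{x}$ and setting it to zero yields \eqref{first_order_condition}; the corner case $\mathbf{x}^*=\mathbf{0}$ forces $\mathbf{b}=\mathbf{0}$ via a short descent argument along $-\mathbf{b}$. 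Establishing \eqref{second_order_condition} is more delicate, because the standard local second-order necessary condition gives only a weaker inequality involving $\nabla^2 f_{\mathbf{A},\mathbf{b},\rho}(\mathbf{x}^*)$; I would bridge this gap by exploiting \emph{global} optimality, substituting test points of the form $\mathbf{x}^* + t\mathbf{u}$ with $\mathbf{u}$ a putative negative-curvature direction of $\mathbf{A}+\rho\|\mathbf{x}^*\|\mathbf{I}$ into the identity derived below and deriving a contradiction with $f_{\mathbf{A},\mathbf{b},\rho}(\mathbf{x}^*+t\mathbf{u}) \ge f_{\mathbf{A},\mathbf{b},\rho}(\mathbf{x}^*)$ for small $t$.

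For the \emph{if} direction, given \eqref{first_order_condition} I would substitute $\mathbf{b} = -(\mathbf{A} + \rho\|\mathbf{x}^*\|\mathbf{I})\mathbf{x}^*$ into $f_{\mathbf{A},\mathbf{b},\rho}(\mathbf{y}) - f_{\mathbf{A},\mathbf{b},\rho}(\mathbf{x}^*)$ and simplify to obtain the identity
\[
f_{\mathbf{A},\mathbf{b},\rho}(\mathbf{y}) - f_{\mathbf{A},\mathbf{b},\rho}(\mathbf{x}^*) = \tfrac{1}{2}(\mathbf{y}-\mathbf{x}^*)^{\mathrm{T}}\bigl(\mathbf{A}+\rho\|\mathbf{x}^*\|\mathbf{I}\bigr)(\mathbf{y}-\mathbf{x}^*) + \tfrac{\rho}{6}\bigl(\|\mathbf{y}\|-\|\mathbf{x}^*\|\bigr)^{2}\bigl(2\|\mathbf{y}\|+\|\mathbf{x}^*\|\bigr).
\]
Assumption \eqref{second_order_condition} makes the first term nonnegative, while the second term is manifestly nonnegative, yielding global minimality.

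Uniqueness under $\mathbf{A}+\rho\|\mathbf{x}^*\|\mathbf{I}\succ\mathbf{0}$ then follows immediately from the same identity: whenever $\mathbf{y}\neq\mathbf{x}^*$ the quadratic term is strictly positive while the cubic residual is still nonnegative, so $f_{\mathbf{A},\mathbf{b},\rho}(\mathbf{y}) > f_{\mathbf{A},\mathbf{b},\rho}(\mathbf{x}^*)$, and $\mathbf{x}^*$ being a critical point is immediate from \eqref{first_order_condition}. The main obstacle, in my view, is verifying the precise algebraic form of the cubic residual---in particular, factoring $\tfrac{\rho}{3}\|\mathbf{y}\|^3 - \tfrac{\rho}{2}\|\mathbf{x}^*\|\|\mathbf{y}\|^2 + \tfrac{\rho}{6}\|\mathbf{x}^*\|^3$ as $\tfrac{\rho}{6}(\|\mathbf{y}\|-\|\mathbf{x}^*\|)^{2}(2\|\mathbf{y}\|+\|\mathbf{x}^*\|)$ to expose nonnegativity; once this factorization is in hand, every remaining step reduces to routine bookkeeping.
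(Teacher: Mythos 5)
The paper itself does not prove Proposition~\ref{prop:optimality_conditions} (it is quoted from the cited references), so your proposal should be measured against the standard argument in that literature, which your identity-based plan largely reproduces. Your central identity is correct: substituting $\mathbf{b}=-(\mathbf{A}+\rho\|\mathbf{x}^*\|\mathbf{I})\mathbf{x}^*$ does give
\begin{equation*}
f_{\mathbf{A},\mathbf{b},\rho}(\mathbf{y})-f_{\mathbf{A},\mathbf{b},\rho}(\mathbf{x}^*)=\tfrac{1}{2}(\mathbf{y}-\mathbf{x}^*)^{\mathrm{T}}(\mathbf{A}+\rho\|\mathbf{x}^*\|\mathbf{I})(\mathbf{y}-\mathbf{x}^*)+\tfrac{\rho}{6}\bigl(\|\mathbf{y}\|-\|\mathbf{x}^*\|\bigr)^{2}\bigl(2\|\mathbf{y}\|+\|\mathbf{x}^*\|\bigr),
\end{equation*}
and with it the \emph{if} direction, the strict inequality under $\mathbf{A}+\rho\|\mathbf{x}^*\|\mathbf{I}\succ\mathbf{0}$, and hence uniqueness, all go through exactly as you describe. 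The first-order condition \eqref{first_order_condition} at a minimizer is also fine (note $f$ is $C^1$ even at $\mathbf{0}$, so the ``corner case'' needs no separate descent argument).

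The genuine gap is in your route to \eqref{second_order_condition}. You propose to assume a direction $\mathbf{u}$ with $\mathbf{u}^{\mathrm{T}}(\mathbf{A}+\rho\|\mathbf{x}^*\|\mathbf{I})\mathbf{u}<0$ and contradict $f_{\mathbf{A},\mathbf{b},\rho}(\mathbf{x}^*+t\mathbf{u})\ge f_{\mathbf{A},\mathbf{b},\rho}(\mathbf{x}^*)$ \emph{for small $t$}. By your own identity, $f_{\mathbf{A},\mathbf{b},\rho}(\mathbf{x}^*+t\mathbf{u})-f_{\mathbf{A},\mathbf{b},\rho}(\mathbf{x}^*)=\tfrac{t^2}{2}\mathbf{u}^{\mathrm{T}}(\mathbf{A}+\rho\|\mathbf{x}^*\|\mathbf{I})\mathbf{u}+\tfrac{\rho}{6}(\|\mathbf{x}^*+t\mathbf{u}\|-\|\mathbf{x}^*\|)^2(2\|\mathbf{x}^*+t\mathbf{u}\|+\|\mathbf{x}^*\|)$, and when $\mathbf{x}^{*\mathrm{T}}\mathbf{u}\neq 0$ the residual term is itself of order $t^2$, with coefficient $\rho(\mathbf{x}^{*\mathrm{T}}\mathbf{u})^2/(2\|\mathbf{x}^*\|)$. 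Small-$t$ nonnegativity therefore only yields $\mathbf{A}+\rho\|\mathbf{x}^*\|\mathbf{I}+\rho\,\mathbf{x}^*\mathbf{x}^{*\mathrm{T}}/\|\mathbf{x}^*\|\succeq\mathbf{0}$, i.e.\ the usual local condition $\nabla^2 f_{\mathbf{A},\mathbf{b},\rho}(\mathbf{x}^*)\succeq\mathbf{0}$, which is strictly weaker than \eqref{second_order_condition}: local-but-nonglobal minimizers of the CRS satisfy it while violating \eqref{second_order_condition}, so no argument confined to infinitesimal steps along straight lines can close this gap. The standard repair, which your identity makes immediate, is to take the \emph{non-infinitesimal} step $t=-2\,\mathbf{x}^{*\mathrm{T}}\mathbf{u}/\|\mathbf{u}\|^2$, which preserves the norm, $\|\mathbf{x}^*+t\mathbf{u}\|=\|\mathbf{x}^*\|$, so the cubic residual vanishes exactly and $f$ strictly decreases---contradicting global optimality; in the remaining case $\mathbf{x}^{*\mathrm{T}}\mathbf{u}=0$ the residual is $O(t^4)$ and your small-$t$ argument does work (or perturb $\mathbf{u}$ to $\mathbf{u}+\varepsilon\mathbf{x}^*$ and reuse the first case). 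You should also record the case $\mathbf{x}^*=\mathbf{0}$ separately, where \eqref{second_order_condition} is just $\nabla^2 f_{\mathbf{A},\mathbf{b},\rho}(\mathbf{0})=\mathbf{A}\succeq\mathbf{0}$. With that single repair, your outline is a complete and essentially standard proof.
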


\begin{proposition}[\cite{carmon2019gradient}]
\label{prop:easy_hard_case}
Let $\mathbf{x}^{*}$ be a global solution of CRS~\eqref{cubic_minimization} and the eigendecomposition for $\mathbf{A} = \sum_{i=1}^n \lambda_i \mathbf{v}_i \mathbf{v}_i^T = \mathbf{V} \mathbf{\Lambda} \mathbf{V}^{\mathrm{T}}$,
where $\mathbf{\Lambda} = \text{diag}(\lambda_1, \dots, \lambda_n)$ and $\mathbf{V}=[\mathbf{v}_1, \cdots, \mathbf{v}_{n}]$. If $\mathbf{b}^{\mathrm{T}}\mathbf{v}_1 \neq 0$, then $\mathbf{A} + \rho \|\mathbf{x}^{*}\| \mathbf{I} \succ \mathbf{0}$ and the solution $\mathbf{x}^{*}$ is the unique critical point (and hence the unique solution). Conversely, if $\mathbf{b}^{\mathrm{T}}\mathbf{v}_1 = 0$, then the CRS~\eqref{cubic_minimization} has multiple optimal solutions. 
\end{proposition}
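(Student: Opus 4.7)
The plan is to derive both directions from Proposition~\ref{prop:optimality_conditions} by working in the eigenbasis of $\mathbf{A}$.

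For the forward direction, I would assume $\mathbf{b}^{\mathrm{T}}\mathbf{v}_1 \neq 0$ and argue by contradiction that $\mathbf{A}+\rho\|\mathbf{x}^{*}\|\mathbf{I}$ must be strictly positive definite. Proposition~\ref{prop:optimality_conditions} already gives PSD-ness, so its smallest eigenvalue $\lambda_1+\rho\|\mathbf{x}^{*}\|$ is nonnegative. If it were zero, then $\mathbf{v}_1$ would lie in the null space of $\mathbf{A}+\rho\|\mathbf{x}^{*}\|\mathbf{I}$. Since this matrix is symmetric, its range equals the orthogonal complement of its null space, so the first-order condition $(\mathbf{A}+\rho\|\mathbf{x}^{*}\|\mathbf{I})\mathbf{x}^{*}=-\mathbf{b}$ would force $\mathbf{b}\perp\mathbf{v}_1$, contradicting the hypothesis. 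Hence $\mathbf{A}+\rho\|\mathbf{x}^{*}\|\mathbf{I}\succ\mathbf{0}$, and the final clause of Proposition~\ref{prop:optimality_conditions} delivers uniqueness of $\mathbf{x}^{*}$.

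For the converse, I would expand everything in the eigenbasis. Write $\mathbf{x}^{*}=\sum_i\alpha_i\mathbf{v}_i$ and set $b_i:=\mathbf{b}^{\mathrm{T}}\mathbf{v}_i$, so that $b_1=0$ by hypothesis. Projecting the first-order condition onto each $\mathbf{v}_i$ yields the scalar equations $(\lambda_i+\rho\|\mathbf{x}^{*}\|)\alpha_i=-b_i$. The $i=1$ equation becomes $(\lambda_1+\rho\|\mathbf{x}^{*}\|)\alpha_1=0$, which in the hard case $\lambda_1+\rho\|\mathbf{x}^{*}\|=0$ places no constraint on $\alpha_1$. The coefficients $\alpha_i$ for $i\geq 2$ are pinned down by the remaining linear equations, so the only residual freedom is in $\alpha_1$, subject to the self-consistent norm identity $\alpha_1^{2}+\sum_{i\geq 2}\alpha_i^{2}=\|\mathbf{x}^{*}\|^{2}=\lambda_1^{2}/\rho^{2}$. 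This quadratic in $\alpha_1$ has two real roots $\pm\alpha_0$, producing two distinct candidate vectors; because $b_1=0$, a direct substitution shows that $f$ depends on $\alpha_1$ only through $\alpha_1^2$, so both vectors share the same objective value, both satisfy \eqref{first_order_condition}--\eqref{second_order_condition}, and hence both are global minimizers.

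The main obstacle is the converse direction: one must justify that the hard case is genuinely entered and that the quadratic in $\alpha_1$ really admits two real roots, i.e., that the orthogonal-to-$\mathbf{v}_1$ part of $\mathbf{x}^{*}$ does not already saturate the target radius $-\lambda_1/\rho$. This is the classical hard-case analysis for trust-region-type subproblems, and I would handle it by studying the monotone function $r\mapsto\bigl\|(\mathbf{A}+\rho r\mathbf{I})^{-1}\sum_{i\geq 2}b_i\mathbf{v}_i\bigr\|$ on $(-\lambda_1/\rho,\infty)$: when $b_1=0$ and $\lambda_1<0$, this function is strictly less than $r$ as $r\downarrow -\lambda_1/\rho$, and the gap is exactly what the free parameter $\alpha_1$ fills, guaranteeing two solutions of equal objective value.
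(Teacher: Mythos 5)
The paper itself does not prove this proposition---it is imported verbatim from \cite{carmon2019gradient}---so there is no in-paper argument to compare against; I therefore assess your proposal on its own. Your forward direction is correct and complete: positive semidefiniteness from Proposition~\ref{prop:optimality_conditions}, plus the observation that $-\mathbf{b}$ lies in the range of the symmetric matrix $\mathbf{A}+\rho\|\mathbf{x}^{*}\|\mathbf{I}$, which is orthogonal to its null space containing $\mathbf{v}_1$ whenever $\lambda_1+\rho\|\mathbf{x}^{*}\|=0$, gives strict definiteness and then uniqueness via the last clause of Proposition~\ref{prop:optimality_conditions}.

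The converse, however, has a genuine gap, and it sits exactly where you located it. The claim you use to close the argument---that for $b_1=0$ and $\lambda_1<0$ the map $r\mapsto\bigl\|(\mathbf{A}+\rho r\mathbf{I})^{-1}\sum_{i\ge 2}b_i\mathbf{v}_i\bigr\|$ is strictly less than $r$ as $r\downarrow-\lambda_1/\rho$---is false in general. Its limiting value is $\|(\mathbf{A}-\lambda_1\mathbf{I})^{\dagger}\mathbf{b}\|$, which can exceed $-\lambda_1/\rho$ (and is even $+\infty$ when the $\lambda_1$-eigenspace has dimension greater than one and $\mathbf{b}$ has a component in it). Concretely, take $n=2$, $\rho=1$, $\lambda_1=-1$, $\lambda_2=0$, $\mathbf{b}=(0,10)^{\mathrm{T}}$: the secular equation $100/\sigma^{2}=\sigma^{2}$ has root $\sigma^{*}=\sqrt{10}>-\lambda_1$, so $\mathbf{A}+\rho\|\mathbf{x}^{*}\|\mathbf{I}\succ\mathbf{0}$ and the minimizer is unique even though $\mathbf{b}^{\mathrm{T}}\mathbf{v}_1=0$; likewise any $\mathbf{A}\succ\mathbf{0}$ yields a strictly convex objective and a unique solution regardless of $\mathbf{b}^{\mathrm{T}}\mathbf{v}_1$, so your implicit restriction to $\lambda_1<0$ is needed but still not sufficient. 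The correct dichotomy (which is what \cite{carmon2019gradient} actually establishes) is that non-uniqueness occurs precisely when $\mathbf{b}^{\mathrm{T}}\mathbf{v}_1=0$, $\lambda_1<0$, and $\|(\mathbf{A}-\lambda_1\mathbf{I})^{\dagger}\mathbf{b}\|<-\lambda_1/\rho$; under those hypotheses your $\pm\alpha_0$ construction and the observation that $f$ depends on $\alpha_1$ only through $\alpha_1^{2}$ do go through (one should also note that if the $\lambda_1$-eigenspace is multidimensional, non-uniqueness additionally requires $\mathbf{b}$ to be orthogonal to that whole eigenspace, by the same range argument as in your forward direction). As written, the monotone-function step cannot be repaired, because the converse in its unqualified form is not true; a correct write-up must either add these hypotheses or restate the proposition.
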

From Proposition~\ref{prop:easy_hard_case}, if $\mathbf{b}^{\mathrm{T}}\mathbf{v}_1 \neq 0$, then there is only one critical point, which is also the optimal solution, and hence the gradient norm $\|\nabla f_{\mathbf{A},\mathbf{b},\rho}(\mathbf{x})\|$ serves as an optimality measure. Throughout the paper, we assume $\mathbf{b}^{\mathrm{T}}\mathbf{v}_1 \neq 0$, under which the CRS is said to be in the easy case. This is without much loss of generality as this holds generically true in practice. Moreover, we could easily avoid the hard case ($\mathbf{b}^{\mathrm{T}}\mathbf{v}_1 = 0$) by slightly perturbing the vector $\mathbf{b}$, see~\cite{nesterov2006cubic, carmon2018analysis}. 

To introduce the secular equation, note that in the easy case, conditions~\eqref{first_order_condition} and~\eqref{second_order_condition} can be written as
\begin{empheq}[left=\empheqlbrace]{align}
(\mathbf{\Lambda}+ \sigma \mathbf{I}) \cdot  \mathbf{y}^{*} = \mathbf{c}, \notag\\
\lambda_1 + \sigma > 0.\notag
\end{empheq}
where $\sigma =: \rho \|\mathbf{x}^{*}\|$, $[y_{1}^{*}, \cdots, y_{n}^{*}]^{\mathrm{T}} :=\mathbf{y}^{*} = \mathbf{V}^{\mathrm{T}}\mathbf{x}^{*}$ and $[c_{1}, \cdots, c_{n}]^{\mathrm{T}} := \mathbf{c} = -\mathbf{V}^{\mathrm{T}}\mathbf{b}$. Therefore, 
\begin{equation*}
    y_{i}^{*} = \frac{c_{i}}{\lambda_i + \sigma}, \qquad i = 1,\dots,n.
\end{equation*}
Since the Euclidean norm is invariant to orthogonal transformation, we have
\begin{equation*}
    \frac{\sigma^2}{\rho^2} = \|\mathbf{x}^{*}\|^2 = \|\mathbf{y}^{*}\|^2 = \sum_{i=1}^{n} \frac{c_i^2}{(\lambda_i + \sigma)^2}. 
\end{equation*}
Consequently, instead of solving the complicated nonlinear system~\eqref{first_order_condition}-\eqref{second_order_condition}, we could solve the CRS~\eqref{cubic_minimization} by first finding the (unique) root $\sigma> \max\{-\lambda_1,0\}$ of the equation
\begin{equation}
\label{secular_equation}
    w(\sigma) = \sum_{i=1}^{n} \frac{c_i^2}{(\lambda_i + \sigma)^2} - \frac{\sigma^2}{\rho^2},
\end{equation}
called the secular equation, and then solves the linear system $(\mathbf{A}+\sigma \mathbf{I}) \mathbf{x} = -\mathbf{b}$. The first step can be done efficiently by using existing root-finding algorithms (e.g., the bisection method and Newton's method etc.). 

The disadvantage of the above CRS solver, based on the secular equation~\eqref{secular_equation}, is that it requires the full spectrum of $\mathbf{A}$, which costs $\mathcal{O}(n^3)$. This approach is viable only for low- to moderate-dimensional problems.
However, when $n$ is large, computing all eigenvalues of $\mathbf{A}$ is prohibitive. Worse still, after the root $\sigma$ is solved, we still need to apply iterative methods (e.g., Lanczos method) to solve the large-scale linear system~\eqref{first_order_condition}. We are thus motivated approximate the secular equation by using only some of the eigenvalues of $\mathbf{A}$, as opposed to all.

As our main contribution, we developed two different approximate secular equations (ASEs), both of which require computing $m<n$ eigenvalues of $\mathbf{A}$. The cost for forming the approximate secular equations is only $\mathcal{O}(m n^2)$, and hence the resulting CRS solver is much more efficient and scalable. On the theoretical side, for each of the proposed approximate secular equations, we first studied the existence and uniqueness of its root, and then derived an upper bound on the gap between the root and that of the standard secular equation~\eqref{secular_equation}. This upper bound is in turn used to bound the distance from the approximate CRS solution based ASEs to the true CRS solution, thus offering a theoretical guarantee for the proposed CRS solver. A desirable feature of our CRS solver is that it requires only matrix-vector multiplication but not matrix inversion, which makes it particularly suitable for high-dimensional applications of unconstrained non-convex optimization, such as low-rank recovery and deep learning. 
On the empirical side, we conducted experiments with both synthetic and real problem instances to investigate the practical performance of the proposed CRS solver and the associated CR. Experimental results showed that the proposed solver outperforms two state-of-the-art methods. 
The selection of $m$ for the proposed ASEM is an interesting and crucial topic. We will discuss related issues in Section~\ref{implementation_details} and some numerical explorations are also presented in Section~\ref{numerical_results}.


\section{The First-Order Truncated Secular Equation}
\label{sec:first_order_truncated_secular_equation}
We define the first-order truncated secular equation by
\begin{equation}
\label{first_order_truncated_secular_equation}
    w_1(\sigma;\mu) = \sum_{i=1}^{m} \frac{c_i^2}{(\lambda_i + \sigma)^2} + \sum_{i=m+1}^{n} \frac{c_i^2}{(\mu + \sigma)^2} - \frac{\sigma^2}{\rho^2},
\end{equation}
where $\mu \geq \lambda_{m}$ is an input parameter that approximates the unobserved eigenvalues $\lambda_{m+1}, \cdots, \lambda_{n}$, $c_i = -\mathbf{b}^{\mathrm{T}}\mathbf{v}_{i}$ and $\sum_{i=m+1}^{n}c_i^2 = \|\mathbf{b}\|^2 - \sum_{i=1}^{m} c_i^2$. Note that only $m$ eigenvalues $\lambda_{1}, \cdots, \lambda_{m}$ and their corresponding eigenvectors $\mathbf{v}_1, \cdots, \mathbf{v}_{m}$ are needed to form~\eqref{first_order_truncated_secular_equation}, which is computationally friendlier compared with \eqref{secular_equation}.
The name first-order truncated secular equation comes from the fact that $w_1(\sigma, \mu)$ is the first-order Taylor approximation to the function $w(\sigma)$. Below we will first study the existence and uniqueness for the root of~\eqref{first_order_truncated_secular_equation}. Then, we derive an error bound for the root.

\subsection{Existence and Uniqueness for the Root}
\label{sec: existance_uniqueness}
In the easy case that $\mathbf{b}^{\mathrm{T}}\mathbf{v}_{1} \neq 0$ (equivalently, $c_1 \neq 0$), the solution $\mathbf{x}^{*}$ to the CRS~\eqref{cubic_minimization} is unique, which implies the existence and uniqueness for the root $\sigma^{*}$ of \eqref{secular_equation}.
To show that our proposed CRS solver is also well-defined, we prove the existence and uniqueness of the root of the first-order truncated secular equation~\eqref{first_order_truncated_secular_equation}. 

\begin{lemma}\label{lem:exist_and_unique_1}
For any $\mu  \geq \lambda_m$, the function $w_1(\cdot; \mu)$ as defined in~\eqref{first_order_truncated_secular_equation} admits a unique root.
\end{lemma}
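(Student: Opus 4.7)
The plan is to prove existence and uniqueness of the root on the natural domain $(-\lambda_1, +\infty)$ by verifying three ingredients: continuity, strict monotonicity, and the appropriate sign behavior at the two endpoints. Existence then follows from the intermediate value theorem applied to a continuous function, and uniqueness from strict monotonicity.

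First I would fix the domain. Since $\mu \geq \lambda_m \geq \lambda_1$, the inequality $\sigma > -\lambda_1$ automatically forces $\sigma + \mu > 0$ and $\sigma + \lambda_i > 0$ for every $i = 1, \ldots, m$, so $w_1(\cdot; \mu)$ is well defined and smooth on $(-\lambda_1, +\infty)$. Next, I would establish strict monotonicity by differentiating:
\[
\frac{\partial w_1}{\partial \sigma}(\sigma; \mu) = -2\sum_{i=1}^{m} \frac{c_i^2}{(\lambda_i+\sigma)^3} - 2\sum_{i=m+1}^{n} \frac{c_i^2}{(\mu+\sigma)^3} - \frac{2\sigma}{\rho^2}.
\]
On $(-\lambda_1, +\infty)$ every denominator is positive, so the first two sums are non-positive, and the $i=1$ contribution is \emph{strictly} negative because $c_1 \neq 0$ by the easy-case assumption; hence $\partial w_1/\partial \sigma < 0$ throughout the domain.

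For the boundary behavior, as $\sigma \to (-\lambda_1)^+$ the single term $c_1^2/(\lambda_1+\sigma)^2$ diverges to $+\infty$ while every other term remains bounded from below (possibly also diverging upward if $\mu = \lambda_1$, which only reinforces the conclusion), so $w_1 \to +\infty$; as $\sigma \to +\infty$, each rational term tends to $0$ while $\sigma^2/\rho^2$ grows without bound, so $w_1 \to -\infty$. Combined with continuity and strict monotonicity, this yields exactly one root in $(-\lambda_1, +\infty)$, which is the claim. An extra observation, useful for the later error-bound analysis, is that evaluating at $\sigma = \max\{0,-\lambda_1\}$ shows the root is positive, so it corresponds to a valid CRS quantity $\sigma^* = \rho\|\mathbf{x}^*\|$.

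The only subtle point is the handling of the parameter $\mu$ near the lower bound $\lambda_m$: one might worry that cancellations or new singularities contributed by the second sum could spoil monotonicity or the boundary limits, but since $\mu \geq \lambda_1$ the potential pole at $\sigma = -\mu$ lies outside the chosen domain, and the second sum is bounded on any compact subset of $(-\lambda_1, +\infty)$. Once this is observed, the remainder is standard one-dimensional monotone-function reasoning and should not present any real obstacle.
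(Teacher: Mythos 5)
There is a genuine gap in your monotonicity step, and it matters. You write the derivative
\[
\frac{\partial w_1}{\partial \sigma}(\sigma;\mu) \;=\; -2\sum_{i=1}^{m}\frac{c_i^2}{(\lambda_i+\sigma)^3} \;-\; 2\sum_{i=m+1}^{n}\frac{c_i^2}{(\mu+\sigma)^3} \;-\; \frac{2\sigma}{\rho^2},
\]
and then conclude it is negative on all of $(-\lambda_1,+\infty)$ from the sign of the two sums alone, silently treating the last term as harmless. But when $\lambda_1>0$ the domain $(-\lambda_1,+\infty)$ contains negative values of $\sigma$, where $-2\sigma/\rho^2>0$, so the claimed strict negativity of the derivative does not follow. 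This is not merely a presentational issue: uniqueness on $(-\lambda_1,+\infty)$ can actually fail. Take $\lambda_1>0$, $\rho=1$, and all $c_i$ very small; then near $\sigma=-\lambda_1$ the function blows up to $+\infty$, at some $\sigma\in(-\lambda_1,0)$ the term $-\sigma^2/\rho^2$ dominates the (tiny) sums and $w_1<0$, and at $\sigma=0$ one has $w_1(0;\mu)>0$ again — so $w_1$ has at least two roots in $(-\lambda_1,0)$ in addition to the one in $(0,+\infty)$. Hence "exactly one root in $(-\lambda_1,+\infty)$" is false in general when $\lambda_1>0$.

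The paper avoids this by reading the lemma on the domain that is actually relevant for the CRS, namely $\sigma>\max\{-\lambda_1,0\}$ (recall $\sigma=\rho\|\mathbf{x}\|\ge 0$ and $\lambda_1+\sigma>0$ are required), and by splitting into cases: if $\lambda_1\le 0$ it argues exactly as you do on $(-\lambda_1,+\infty)$ (where $\sigma\ge 0$, so the $-2\sigma/\rho^2$ term also helps monotonicity), while if $\lambda_1>0$ it works on $(0,+\infty)$, using $w_1(0;\mu)>0$ together with $w_1\to-\infty$ for existence and monotone decrease on $(0,+\infty)$ for uniqueness. Your argument is fine for the case $\lambda_1\le 0$, and the repair for $\lambda_1>0$ is exactly this restriction of the domain to $(0,+\infty)$ (or, equivalently, to $\sigma>\max\{-\lambda_1,0\}$); without that restriction the statement you prove is stronger than what is true. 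Your closing remark that evaluating at $\max\{0,-\lambda_1\}$ shows positivity of the root also needs this same case distinction, since $w_1$ is not defined at $\sigma=-\lambda_1$.
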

\begin{proof}
\textbf{Existence.} We first consider the case when $\lambda_1 \leq 0$. Then, for any fixed $\mu \geq \lambda_{m}$,
\begin{equation*}
    \lim_{\sigma \to (-\lambda_1)^{+}} w_1(\sigma;\mu) = +\infty\quad\text{and} \quad \lim_{\sigma \to +\infty} w_1(\sigma; \mu) = -\infty,
\end{equation*}
By the intermediate value theorem, $w_1(\cdot;\mu)$ has a root in $ (-\lambda_1, +\infty)$. For $\lambda_1 > 0$, we have
\begin{equation*}
w_1(0;\mu) > 0 \quad\text{and}\quad    \lim_{\sigma \to +\infty} w_1(\sigma; \mu) = -\infty.
\end{equation*}
Therefore, $w_1(\sigma;\mu)$ has a root in $ (0, +\infty)$.

\textbf{Uniqueness.} Note that $w_1(\sigma; \mu)$ is monotonically decreasing for $\sigma \in (-\lambda_1,+\infty)$ and $\sigma \in (0,+\infty)$ when $\lambda_1 \leq 0$ and $\lambda_1 > 0$, respectively. Therefore, the uniqueness of the root for $w_1(\sigma;\mu)$ is guaranteed. 
\end{proof}

\subsection{Error Analysis}
In order to study the quality of the CRS solution based on our proposed solver using approximate secular equations, we need to study the quality of the root to the first-order truncated secular equation, denoted by $\sigma_1^*$.
Towards that end, we provide an upper bound on the gap $|\sigma_1^{*} - \sigma^{*}|$ between $\sigma_1^*$ and the root~$\sigma^*$ of the exact secular equation~\eqref{secular_equation}. 

\begin{theorem}
\label{error_analysis_first}
Let $\sigma_1^{*}$ and $\sigma^{*}$ be the unique roots of $w_1(\sigma;\mu)$ and $w(\sigma)$, respectively. Then
\begin{equation}
\label{error_bound_first}
    |\sigma_1^{*} - \sigma^{*}| \leq C_m \cdot \max_{m+1 \leq i \leq n} |\lambda_i - \mu|,
\end{equation}
where $C_m > 0$ is a constant, upper bounded by $\frac{2\|\mathbf{b}\|^2}{(\lambda_m - \lambda_1)^3} \cdot \min \left\{\frac{(\lambda_d+B_1)^3}{2\|\mathbf{b}\|^2}, \frac{\rho^2}{2B_1} \right\} $ with $B_1 = \frac{-\lambda_1 + \sqrt{\lambda_1^2 + 4\rho \cdot \|\mathbf{b}\|}}{2}$ being an upper bound for $|\sigma_1^{*}|$.
\end{theorem}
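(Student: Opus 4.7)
The strategy is to apply the mean value theorem to $w_1(\cdot;\mu)$. Since $w_1(\sigma_1^{*};\mu) = 0$, for some $\eta$ strictly between $\sigma^{*}$ and $\sigma_1^{*}$ (the case $\sigma^{*}=\sigma_1^{*}$ being trivial),
\begin{equation*}
|\sigma^{*} - \sigma_1^{*}| = \frac{|w_1(\sigma^{*};\mu)|}{|w_1'(\eta;\mu)|},
\end{equation*}
so the proof decouples into an upper bound on the numerator, which will carry the factor $\max_{i>m}|\lambda_i-\mu|$, and a lower bound on the denominator, which should be $\eta$-free.

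For the numerator, I would exploit that $w(\sigma^{*}) = 0$ to rewrite
\begin{equation*}
w_1(\sigma^{*};\mu) = w_1(\sigma^{*};\mu) - w(\sigma^{*}) = \sum_{i=m+1}^{n} c_i^2\left[\frac{1}{(\mu+\sigma^{*})^2} - \frac{1}{(\lambda_i+\sigma^{*})^2}\right],
\end{equation*}
factor each bracket as $(\lambda_i-\mu)(\lambda_i+\mu+2\sigma^{*})/[(\mu+\sigma^{*})^2(\lambda_i+\sigma^{*})^2]$, and split $\lambda_i+\mu+2\sigma^{*}=(\lambda_i+\sigma^{*})+(\mu+\sigma^{*})$ so that the expression collapses into a sum of two terms each of whose denominator is a triple product. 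Proposition~\ref{prop:optimality_conditions} gives $\sigma^{*}>-\lambda_1$, while $\mu\geq\lambda_m$ together with $\lambda_i\geq\lambda_m$ for $i>m$ force $\mu+\sigma^{*},\,\lambda_i+\sigma^{*}\geq\lambda_m-\lambda_1$. Pulling out $\max_{i>m}|\lambda_i-\mu|$ and using $\sum_{i>m}c_i^2\leq\|\mathbf{b}\|^2$ yields
\begin{equation*}
|w_1(\sigma^{*};\mu)|\leq \frac{2\|\mathbf{b}\|^2}{(\lambda_m-\lambda_1)^3}\max_{m+1\leq i\leq n}|\lambda_i-\mu|,
\end{equation*}
which is exactly the first factor in the stated bound on $C_m$.

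For the denominator, direct differentiation gives $-w_1'(\eta;\mu) = 2\sum_{i=1}^{m}\frac{c_i^2}{(\lambda_i+\eta)^3} + 2\sum_{i=m+1}^{n}\frac{c_i^2}{(\mu+\eta)^3} + \frac{2\eta}{\rho^2}$, all three terms positive. Two inequivalent lower bounds produce the two entries inside the $\min$: keeping the two sums and uniformly upper-bounding every shifted eigenvalue by $\lambda_d+B_1$ (via $\eta\leq B_1$) yields $-w_1'(\eta;\mu)\geq 2\|\mathbf{b}\|^2/(\lambda_d+B_1)^3$, whereas retaining the linear-in-$\eta$ term combined with $\eta\leq B_1$ yields an estimate of order $\rho^2/(2B_1)$; taking reciprocals and then the minimum across the two bounds produces the $\min$-structure in $C_m$. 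The auxiliary fact $|\sigma_1^{*}|\leq B_1$ used throughout is obtained separately: in $w_1(\sigma_1^{*};\mu)=0$ I would lower-bound every denominator by $\lambda_1+\sigma_1^{*}$, giving $(\sigma_1^{*})^2(\lambda_1+\sigma_1^{*})^2\leq\rho^2\|\mathbf{b}\|^2$, and the quadratic formula applied to $\sigma_1^{*}(\lambda_1+\sigma_1^{*})\leq\rho\|\mathbf{b}\|$ recovers the stated $B_1$. The main obstacle I anticipate is controlling $\eta$: it lies only in an open interval and may approach $\max\{-\lambda_1,0\}$, where the linear-in-$\eta$ term becomes weak; the $\min$-structure of $C_m$ is precisely what hedges between this regime and the spectral one, using whichever of the two lower bounds on $|w_1'(\eta;\mu)|$ is the stronger.
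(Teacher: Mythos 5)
Your argument is essentially the paper's proof in mirror image: the paper Taylor-expands the exact function $w$ around $\sigma_1^*$, so that the numerator is the truncation error $e_1=w-w_1$ evaluated at $\sigma_1^*$ and the denominator is $|w'(\xi)|$, whereas you expand $w_1(\cdot;\mu)$ and evaluate the error at $\sigma^*$; your explicit factorization of $\frac{1}{(\mu+\sigma)^2}-\frac{1}{(\lambda_i+\sigma)^2}$ is just an unpacked version of the paper's mean-value step in the eigenvalue variable, and it yields the identical numerator bound $\frac{2\|\mathbf{b}\|^2}{(\lambda_m-\lambda_1)^3}\max_{i>m}|\lambda_i-\mu|$, while the bound $|\sigma_1^*|\le B_1$ is obtained exactly as in the paper. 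Three caveats. First, because your denominator is $w_1'$ rather than $w'$, the unobserved block contributes $(\mu+\eta)^{-3}$, so you only recover the stated first entry $(\lambda_n+B_1)^3/(2\|\mathbf{b}\|^2)$ (the theorem's $\lambda_d$ is a typo for $\lambda_n$) when $\mu\le\lambda_n$; this holds for the paper's choices \eqref{mu_first} and \eqref{specific_choice_mu_second} but not for arbitrary $\mu\ge\lambda_m$, in which case your constant degrades to $(\mu+B_1)^3/(2\|\mathbf{b}\|^2)$. Second, to place $\eta$ in $[0,B_1]$ you need $\sigma^*\le B_1$ as well as $\sigma_1^*\le B_1$; you only derive the latter, though the former follows by the same one-line quadratic argument applied to $w(\sigma^*)=0$, as the paper does. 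Third, the entry $\rho^2/(2B_1)$: as you yourself observe, $\eta\le B_1$ bounds the linear term $2\eta/\rho^2$ from above, not below, so your derivation of that entry does not go through as stated; note, however, that the paper's own proof makes the identical leap (concluding $|w'(\xi)|\ge 2B_1/\rho^2$ from $\xi\le B_1$), so on this point you have faithfully reproduced, rather than repaired, the argument, and in both versions only the $(\lambda_n+B_1)^3$ entry of the $\min$ is rigorously justified.
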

We clearly see that the right-hand side of inequality~\eqref{error_analysis_first} is decreasing in $m$. This confirms that using more eigen information (i.e., larger $m$) helps to reduce the error $|\sigma_1^{*} - \sigma^{*}|$. 
The proof of Theorem~\ref{error_analysis_first} is technical and quite long and hence relegated to Appendix~\ref{appendix:A}. The approximation quality of our CRS solver is guaranteed by combining Theorem~\ref{error_analysis_first} with the following proposition.

\begin{proposition}\label{prop:x-error}
Let $\mathbf{x}^{*}$ and $\Tilde{\mathbf{x}}$ be solutions to the equations $\left(\mathbf{A} + \sigma^{*} \mathbf{I} \right) \mathbf{x}^{*} = -\mathbf{b}$ and  $\left(\mathbf{A} + \sigma_{1}^{*} \mathbf{I} \right) \Tilde{\mathbf{x}} = -\mathbf{b}$, respectively. Then, $\|\Tilde{\mathbf{x}}  - \mathbf{x}^{*}\|  = \mathcal{O}\left(|\sigma_{1}^{*} - \sigma^{*}|\right)$.
\end{proposition}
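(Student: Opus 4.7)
The plan is to subtract the two defining linear systems and exploit the positive definiteness of $\mathbf{A} + \sigma^{*}\mathbf{I}$, which holds in the easy case since $\sigma^{*} > -\lambda_1$. Writing $(\mathbf{A} + \sigma^{*}\mathbf{I})\mathbf{x}^{*} = -\mathbf{b} = (\mathbf{A} + \sigma_1^{*}\mathbf{I})\tilde{\mathbf{x}}$ and rearranging, I obtain the key identity
\begin{equation*}
(\mathbf{A} + \sigma^{*}\mathbf{I})(\mathbf{x}^{*} - \tilde{\mathbf{x}}) = (\sigma_1^{*} - \sigma^{*})\,\tilde{\mathbf{x}}.
\end{equation*}
Since $\mathbf{A} + \sigma^{*}\mathbf{I} \succ \mathbf{0}$ with smallest eigenvalue $\lambda_1 + \sigma^{*} > 0$, inverting and taking Euclidean norms yields
\begin{equation*}
\|\tilde{\mathbf{x}} - \mathbf{x}^{*}\| \;\leq\; \frac{|\sigma_1^{*} - \sigma^{*}|}{\lambda_1 + \sigma^{*}}\,\|\tilde{\mathbf{x}}\|.
\end{equation*}

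Next I would bound $\|\tilde{\mathbf{x}}\|$ using the analogous positive-definiteness of $\mathbf{A} + \sigma_1^{*}\mathbf{I}$: by Lemma~\ref{lem:exist_and_unique_1} we have $\sigma_1^{*} > -\lambda_1$, so $\|\tilde{\mathbf{x}}\| \leq \|\mathbf{b}\|/(\lambda_1 + \sigma_1^{*})$. Substituting gives the explicit estimate
\begin{equation*}
\|\tilde{\mathbf{x}} - \mathbf{x}^{*}\| \;\leq\; \frac{\|\mathbf{b}\|}{(\lambda_1 + \sigma^{*})(\lambda_1 + \sigma_1^{*})}\,|\sigma_1^{*} - \sigma^{*}|,
\end{equation*}
which already has the desired linear dependence on $|\sigma_1^{*} - \sigma^{*}|$.

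The main obstacle is certifying that the prefactor is a genuine constant, i.e.\ that $\lambda_1 + \sigma^{*}$ and $\lambda_1 + \sigma_1^{*}$ are bounded \emph{away from zero} uniformly. The easy-case hypothesis $c_1 \neq 0$ is the right tool. From the identity $w(\sigma^{*}) = 0$, isolating the $i = 1$ term gives $c_1^2/(\lambda_1+\sigma^{*})^2 \leq \sigma^{*2}/\rho^{2}$, so $\lambda_1 + \sigma^{*} \geq \rho |c_1|/\sigma^{*}$; combined with any a priori upper bound on $\sigma^{*}$ (e.g.\ the analogue of $B_1$ in Theorem~\ref{error_analysis_first}), this is a strictly positive constant depending only on $\mathbf{A}$, $\mathbf{b}$, $\rho$. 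The same argument applied to $w_1(\sigma_1^{*};\mu) = 0$ yields a uniform lower bound on $\lambda_1 + \sigma_1^{*}$. Absorbing both into a constant concludes $\|\tilde{\mathbf{x}} - \mathbf{x}^{*}\| = \mathcal{O}(|\sigma_1^{*} - \sigma^{*}|)$.
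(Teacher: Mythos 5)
Your proof is correct, and it takes a slightly different route from the paper's. The paper expands both solutions in the eigenbasis of $\mathbf{A}$, writing $\mathbf{x}^{*}=\sum_i(\lambda_i+\sigma^{*})^{-1}c_i\mathbf{v}_i$ and $\Tilde{\mathbf{x}}=\sum_i(\lambda_i+\sigma_1^{*})^{-1}c_i\mathbf{v}_i$, and bounds $\|\Tilde{\mathbf{x}}-\mathbf{x}^{*}\|$ by $\max_i\bigl|(\lambda_i+\sigma_1^{*})^{-1}-(\lambda_i+\sigma^{*})^{-1}\bigr|\cdot\|\mathbf{b}\|$, asserting this is $\mathcal{O}(|\sigma_1^{*}-\sigma^{*}|)$ without making the hidden constant explicit. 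You instead subtract the two linear systems to get the perturbation identity $(\mathbf{A}+\sigma^{*}\mathbf{I})(\mathbf{x}^{*}-\Tilde{\mathbf{x}})=(\sigma_1^{*}-\sigma^{*})\Tilde{\mathbf{x}}$ and invoke the operator-norm bound $\|(\mathbf{A}+\sigma^{*}\mathbf{I})^{-1}\|=(\lambda_1+\sigma^{*})^{-1}$, which is the same underlying resolvent estimate but packaged without the explicit eigen-expansion. What your version buys is an explicit prefactor $\|\mathbf{b}\|/\bigl((\lambda_1+\sigma^{*})(\lambda_1+\sigma_1^{*})\bigr)$ together with a justification that it is a genuine finite constant: you use $\sigma^{*}>\max\{-\lambda_1,0\}$ and $\sigma_1^{*}>\max\{-\lambda_1,0\}$ (from Lemma~\ref{lem:exist_and_unique_1}) for positivity, and the easy-case condition $c_1\neq 0$ applied to $w(\sigma^{*})=0$ and $w_1(\sigma_1^{*};\mu)=0$, combined with the a priori bound $B_1$, to bound $\lambda_1+\sigma^{*}$ and $\lambda_1+\sigma_1^{*}$ away from zero. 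This last step addresses a point the paper's proof leaves implicit (its $\mathcal{O}(\cdot)$ claim silently requires exactly such lower bounds, with the worst case at $i=1$), so your argument is, if anything, the more complete of the two.
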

\begin{proof}
By definition, we have 
\begin{equation*}
    \mathbf{x}^{*} = \sum_{i=1}^{n} \left(\lambda_{i} + \sigma^{*} \right)^{-1} \mathbf{v}_{i}\mathbf{v}_{i}^{\mathrm{T}} \cdot (-\mathbf{b}) = \sum_{i=1}^{n} \left(\lambda_{i} + \sigma^{*} \right)^{-1} c_i \cdot  \mathbf{v}_{i},
\end{equation*}
and
\begin{equation*}
    \Tilde{\mathbf{x}} = \sum_{i=1}^{n} \left(\lambda_{i} + \sigma_{1}^{*} \right)^{-1} \mathbf{v}_{i}\mathbf{v}_{i}^{\mathrm{T}} \cdot (-\mathbf{b}) = \sum_{i=1}^{n} \left(\lambda_{i} + \sigma_{1}^{*} \right)^{-1} c_i \cdot  \mathbf{v}_{i},
\end{equation*}
then 
\begin{equation*}
\begin{split}
        \|\Tilde{\mathbf{x}}  - \mathbf{x}^{*}\| & =\left \| \sum_{i=1}^{n} \left( \left(\lambda_{i} + \sigma_{1}^{*} \right)^{-1} - \left(\lambda_{i} + \sigma^{*} \right)^{-1} \right) \mathbf{v}_{i}\mathbf{v}_{i}^{\mathrm{T}} \cdot (-\mathbf{b}) \right\| \\
        & \leq  \left( \max_{1 \leq i \leq n} \left \{ \left(\lambda_{i} + \sigma_{1}^{*} \right)^{-1} - \left(\lambda_{i} + \sigma^{*} \right)^{-1} \right \} \right) \cdot \|\mathbf{b}\|\\
        & = \mathcal{O}\left(|\sigma_{1}^{*} - \sigma^{*}|\right).
\end{split}
\end{equation*}
This completes the proof.
\end{proof}

Before ending this section, some remarks are in order. First, the parameter $\mu$ acts as an approximation to $n-m$ unknown eigenvalues $\lambda_{m+1}, \cdots, \lambda_{n}$. An intuitive choice of $\mu$ that works well in practice and is computationally cheap is the average of unknown eigenvalues, i.e.,
\begin{equation}
\label{mu_first}
    \mu_1 = \frac{\sum_{i=m+1}^{n} \lambda_i}{n-m} = \frac{\text{tr}(\mathbf{A}) - \sum_{i=1}^{m}\lambda_i}{n-m}.
\end{equation}

Second, the error bound $C_m \cdot \max_{m+1 \leq i \leq n} |\lambda_i - \mu|$ in Theorem~\ref{error_analysis_first} depends on the distribution of eigenvalues of $\mathbf{A}$. If the unobserved eigenvalues $\lambda_{m+1}, \cdots, \lambda_{n}$ cluster around a small interval, then with a suitable choice of $\mu \in [\lambda_{m+1}, \lambda_{n}] $, $\max_{m+1 \leq i \leq n} |\lambda_i - \mu|$ is small. Conversely, if the unknown eigenvalues spread over a large interval, then it is hard to make the error $\max_{m+1 \leq i \leq n} |\lambda_i - \mu|$ small.

Third, it is instructive to study the error bound~\eqref{error_bound_first} under some random matrix model for~$\mathbf{A}$.  
Suppose that $\mathbf{A} = \widetilde{\mathbf{A}}/\sqrt{2n}$, where $\widetilde{\mathbf{A}}$ is a symmetric random matrix with i.i.d. entries on and above the diagonal. By the Wigner semicircle law~\cite{forrester2010log}, as $n\to \infty$, the eigenvalues of $\mathbf{A}$ distribute according to a density of a semi-circle shape. In particular, we can deduce that with probability of $1-o(1)$,  
\begin{equation}
\label{error_bound_random_gaussian}
    \max_{m+1 \leq i \leq n} |\lambda_i - \mu| \leq \mathcal{O}\left(\left(1-\frac{m+1}{n}\right)^{2/3}\right) \approx \left(\frac{3\pi}{4\sqrt{2}}\right)^{2/3} \cdot \left(1-\frac{m+1}{n}\right)^{2/3}
\end{equation}
The detailed proof of \eqref{error_bound_random_gaussian} and further discussions under random $\mathbf{A}$ can be found in Appendix~\ref{random_gaussian_matrix}. 

\section{The Second-Order Truncated Secular Equation}
\label{sec:second_order_truncated_secular_equation}
Similarly to the equation~\eqref{first_order_truncated_secular_equation}, but with the second-order Taylor approximation, we define the second-order truncated secular equation by
\begin{equation}
\label{second_order_truncated_secular_equation}
    w_2(\sigma;\mu) = \sum_{i=1}^{m} \frac{c_i^2}{(\lambda_i + \sigma)^2} + \sum_{i=m+1}^{n} \frac{c_i^2}{(\mu + \sigma)^2} -2\sum_{i=m+1}^{n} \frac{c_i^2 \cdot (\lambda_i - \mu)}{(\mu+\sigma)^3} - \frac{\sigma^2}{\rho^2},
\end{equation}
where $\mu \geq \lambda_{m}$ is an input parameter that approximates the unobserved eigenvalues $\lambda_{m+1}, \cdots, \lambda_{n}$. 

\subsection{Existence and Uniqueness for the Root}
The lemma blew shows the existence and uniqueness of the root of $w_2(\cdot;\mu)$. 

\begin{lemma}
\label{lem:exist_and_unique_2}
With
\begin{equation}
\label{specific_choice_mu_second}
    \mu = \frac{\sum_{i=m+1}^{n} c_i^2 \cdot \lambda_i}{\sum_{i=m+1}^{n} c_i^2},
\end{equation}
the function $w_2(\cdot; \mu)$ as defined in~\eqref{second_order_truncated_secular_equation} admits a unique root.
\end{lemma}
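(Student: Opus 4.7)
The key observation is that the specific choice of $\mu$ in~\eqref{specific_choice_mu_second} is precisely the weighted average of $\lambda_{m+1}, \ldots, \lambda_n$ using the weights $c_{m+1}^2, \ldots, c_n^2$. Consequently,
\begin{equation*}
\sum_{i=m+1}^{n} c_i^2 (\lambda_i - \mu) \;=\; \sum_{i=m+1}^{n} c_i^2 \lambda_i - \mu \sum_{i=m+1}^{n} c_i^2 \;=\; 0,
\end{equation*}
so the third (cubic) sum in the definition~\eqref{second_order_truncated_secular_equation} vanishes identically in $\sigma$. This reduces $w_2(\sigma;\mu)$ exactly to $w_1(\sigma;\mu)$ with the same $\mu$.

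The plan, then, is to first verify this cancellation by plugging the definition of $\mu$ directly into the third term. Second, I would check that the chosen $\mu$ satisfies the hypothesis $\mu \geq \lambda_m$ needed to invoke Lemma~\ref{lem:exist_and_unique_1}: since $\mu$ is a convex combination of $\lambda_{m+1}, \ldots, \lambda_n$ with nonnegative weights $c_i^2 \ge 0$ (and at least one positive weight, as is needed for $\mu$ to be well-defined), we have $\mu \geq \lambda_{m+1} \geq \lambda_m$ by the ordering of eigenvalues. Third, I would apply Lemma~\ref{lem:exist_and_unique_1} to the reduced function to conclude both existence and uniqueness of the root of $w_2(\cdot;\mu)$.

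The only mild subtlety — and arguably the only non-routine point — is ensuring the denominator $\sum_{i=m+1}^{n} c_i^2$ in~\eqref{specific_choice_mu_second} is nonzero so that $\mu$ is well-defined in the first place. If all $c_i = 0$ for $i > m$, then $\mathbf{b}$ lies entirely in the span of $\mathbf{v}_1, \ldots, \mathbf{v}_m$, in which case the third sum in~\eqref{second_order_truncated_secular_equation} is vacuously zero and $w_2$ again reduces to a function of the $w_1$ form (indeed, the CRS itself decouples into a low-dimensional problem), so one can degenerately take any $\mu \geq \lambda_m$ and invoke Lemma~\ref{lem:exist_and_unique_1}. Aside from handling this degenerate case, the proof is essentially a one-line reduction to the previous lemma, so I would expect it to be very short.
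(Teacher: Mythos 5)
Your proposal is correct and follows essentially the same route as the paper's proof: the specific choice of $\mu$ makes the third sum in~\eqref{second_order_truncated_secular_equation} vanish, so $w_2(\cdot;\mu)$ coincides with $w_1(\cdot;\mu)$ and Lemma~\ref{lem:exist_and_unique_1} applies. Your additional checks that $\mu \geq \lambda_m$ (as a convex combination of $\lambda_{m+1},\dots,\lambda_n$) and that the denominator $\sum_{i=m+1}^{n} c_i^2$ is nonzero are sensible details that the paper leaves implicit.
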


\begin{proof}
When
\begin{equation*}
    \mu = \frac{\sum_{i=m+1}^{n} c_i^2 \cdot \lambda_i}{\sum_{i=m+1}^{n} c_i^2},
\end{equation*}
the third summation in the definition~\eqref{second_order_truncated_secular_equation} vanishes, and hence $w_2(\sigma, \mu)$ becomes the same as $w_1 (\sigma, \mu)$, except with a specific choice of $\mu$. The desired conclusion then follows from Lemma~\ref{lem:exist_and_unique_1}. 
\end{proof}

Unlike its first-order counterpart, we do not develop the existence and uniqueness of the root of the second-order truncated secular equation for arbitrary $\mu$. The reason is that when $\lambda_1 > 0$, $w_2(0;\mu)$ can potentially be positive or negative.

%
%

\subsection{Error Analysis}
Similar to that for the first-order truncated secular equation, we can also derive an error bound for the root of the second-order truncated secular equation. 

\begin{theorem}
\label{error_analysis_second}
Let $\sigma_2^{*}$ and $\sigma^{*}$ be the unique root of $w_2(\sigma;\mu)$ and $w(\sigma)$, respectively, and
\begin{equation*}
    \mu = \frac{\sum_{i=m+1}^{n} c_i^2 \cdot  \lambda_i}{\sum_{i=m+1}^{n} c_i^2}.
\end{equation*}
Then,
\begin{equation}
\label{error_bound_second}
    |\sigma_2^{*} - \sigma^{*}| \leq C_m \cdot \max_{m+1 \leq i \leq n} (\lambda_i - \mu)^2,
\end{equation}
where $C_m > 0$ is a constant bounded by $\frac{3\|\mathbf{b}\|^2}{(\lambda_m - \lambda_1)^4} \cdot \min \left\{\frac{(\lambda_n+B_1)^3}{2\|\mathbf{b}\|^2}, \frac{\rho^2}{2B_1} \right\} $ with $B_1 = \frac{-\lambda_1 + \sqrt{\lambda_1^2 + 4\rho \cdot \|\mathbf{b}\|}}{2}$ being an upper bound for $|\sigma_2^{*}|$.
\end{theorem}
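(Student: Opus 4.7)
The plan is to mirror the strategy used for Theorem~\ref{error_analysis_first}, which hinges on a Taylor expansion of $w$ around $\mu$, but now push the expansion to one higher order to capture the quadratic nature of the bound. Specifically, for each $i \ge m+1$ I would write, via Taylor's theorem with the Lagrange remainder applied to $t \mapsto c_i^2/(t+\sigma)^2$ at $t = \mu$,
\begin{equation*}
\frac{c_i^2}{(\lambda_i+\sigma)^2}
= \frac{c_i^2}{(\mu+\sigma)^2}
- \frac{2c_i^2 (\lambda_i - \mu)}{(\mu+\sigma)^3}
+ \frac{3 c_i^2(\lambda_i - \mu)^2}{(\xi_i + \sigma)^4},
\end{equation*}
where $\xi_i$ lies between $\lambda_i$ and $\mu$. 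Summing over $i = m+1, \dots, n$ and comparing with the definition of $w_2$ yields the key identity
\begin{equation*}
w(\sigma) - w_2(\sigma;\mu) = 3\sum_{i=m+1}^{n} \frac{c_i^2 (\lambda_i - \mu)^2}{(\xi_i + \sigma)^4},
\end{equation*}
which holds for arbitrary $\mu$; note that the specific weighted-average choice of $\mu$ is needed only to ensure uniqueness of $\sigma_2^{*}$ via Lemma~\ref{lem:exist_and_unique_2}.

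Next I would apply the mean value theorem to $w$. Since $w(\sigma^{*}) = 0 = w_2(\sigma_2^{*};\mu)$, there exists $\eta$ between $\sigma^{*}$ and $\sigma_2^{*}$ such that
\begin{equation*}
|\sigma_2^{*} - \sigma^{*}| \;=\; \frac{|w(\sigma_2^{*}) - w_2(\sigma_2^{*};\mu)|}{|w'(\eta)|}.
\end{equation*}
The numerator is bounded above by $3\,\max_{m+1\le i\le n}(\lambda_i - \mu)^2 \cdot \sum_{i=m+1}^n c_i^2/(\xi_i + \sigma_2^{*})^4$. Since $\xi_i \in [\lambda_m, \lambda_n]$ and $\sigma_2^{*} > -\lambda_1$, we have $\xi_i + \sigma_2^{*} \ge \lambda_m - \lambda_1$, so this sum is controlled by $\|\mathbf{b}\|^2/(\lambda_m - \lambda_1)^4$, producing the $(\lambda_m - \lambda_1)^{-4}$ factor in the stated constant (as opposed to $(\lambda_m - \lambda_1)^{-3}$ in Theorem~\ref{error_analysis_first}, the extra power coming from the additional $1/(\xi_i+\sigma)$ in the quadratic remainder).

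For the denominator I would reuse exactly the lower-bound technique on $|w'(\eta)|$ developed for Theorem~\ref{error_analysis_first}, which yields the $\min\{(\lambda_n+B_1)^3/(2\|\mathbf{b}\|^2),\, \rho^2/(2B_1)\}$ factor. The upper bound $B_1 = (-\lambda_1 + \sqrt{\lambda_1^2 + 4\rho\|\mathbf{b}\|})/2$ on $|\sigma_2^{*}|$ is derived by substituting $\sigma_2^{*}$ into $w_2 = 0$ and estimating the first summation crudely by $\|\mathbf{b}\|^2/(\lambda_m + \sigma_2^{*})^2$ and the remaining terms analogously, then solving the resulting quadratic in $\sigma_2^{*}$; this mirrors the derivation already used to bound $|\sigma_1^{*}|$.

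The main obstacle I anticipate is the lower bound on $|w'(\eta)|$: one must argue that $\eta$ cannot be too close to $-\lambda_1$ (otherwise $w'$ blows up in the wrong sign and no useful lower bound is available), and one must carefully split cases based on the sign of $\lambda_1$, just as in the proof of Theorem~\ref{error_analysis_first}. Everything else reduces to tracking constants through the Taylor remainder; once the identity for $w - w_2$ and the $\eta$-estimate are in place, combining them gives the claimed bound $C_m \cdot \max_{m+1\le i\le n}(\lambda_i - \mu)^2$ with $C_m$ dominated by the stated quantity.
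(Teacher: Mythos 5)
Your proposal follows essentially the same route as the paper's proof: the second-order Lagrange-remainder identity $w(\sigma)-w_2(\sigma;\mu)=3\sum_{i>m} c_i^2(\lambda_i-\mu)^2/(\varsigma_i+\sigma)^4$, the mean-value bound $|\sigma_2^*-\sigma^*|\le |e_2(\sigma_2^*;\mu)|/|w'(\eta)|$, the remainder estimate $3\|\mathbf{b}\|^2(\lambda_m-\lambda_1)^{-4}\max_{i>m}(\lambda_i-\mu)^2$, and the reuse of Theorem~\ref{error_analysis_first}'s lower bound on $|w'|$ via $B_1$. One small correction: the weighted-average $\mu$ is not needed only for uniqueness --- it makes the cross term in $w_2$ vanish, so that $w_2$ coincides with $w_1$, which is precisely what lets you carry over the bound $\sigma_2^*\le B_1$ (and note that the crude estimate there must use $\lambda_1$, not $\lambda_m$, in the denominator) and hence the derivative lower bound from Theorem~\ref{error_analysis_first}.
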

The proof of Theorem~\ref{error_analysis_second} can be found in Appendix~\ref{appendix_proof_thm2}. We can similarly estimate the approximation quality by combining Theorem~\ref{error_analysis_second} and Proposition~\ref{prop:x-error}.
Again, the right-hand side of the error bound~\eqref{error_bound_second} is decreasing in $m$. We should also point out that the CRS solver based on the second-order secular equation outperforms the first-order counterpart only if $\max_{m+1 \leq i \leq n }|\lambda_i - \mu|/|\lambda_{m}-\lambda_{1}|$ is small enough. The computation of $\mu$ here requires $c_{m+1}, \cdots, c_{n}$, which seem to be inaccessible. We provide a tractable form for $\mu$ in (\ref{mu_2}) and will discuss it in the next part. 

\section{Implementation Details}
\label{implementation_details}
We now discuss the implementation details for solving the proposed first-order secular equation for CRS. First, we obtain the partial eigen information $\{\lambda_{1}, \cdots, \lambda_{m}\}$ and $\{\mathbf{v}_1, \cdots, \mathbf{v}_{m}\}$ by Krylov subspace methods. Note that only Hessian-vector products are required. This is computationally friendlier than other methods that rely on matrix inversions and particularly suitable for modern, high-dimensional applications. Then, we solve the first-order secular equation (\ref{first_order_truncated_secular_equation}) with $\mu$ defined in (\ref{mu_first}) or (\ref{specific_choice_mu_second}), using any root-finding algorithm, such as Newton's method. Finally, we solve the linear system $(\mathbf{A} + \sigma^{*} \mathbf{I}) \mathbf{x} = -\mathbf{b}$ by iterative algorithms, e.g., the Lanczos method and the conjugate gradient method.  
The resulting CRS solver, namely the approximate secular equation method (ASEM), is summarized as follows:\\
\textbf{Step 1:} obtaining the partial eigen information $\{\lambda_{1}, \cdots, \lambda_{m}\}$ and $\{\mathbf{v}_1, \cdots, \mathbf{v}_{m}\}$ of $\mathbf{A}$.\\
\textbf{Step 2:} solving the secular equation (\ref{first_order_truncated_secular_equation}) with $\mu$ defined in (\ref{mu_first}) or (\ref{specific_choice_mu_second}); we get $\sigma^{*}$.\\
\textbf{Step 3:} iteratively solving the linear system $(\mathbf{A} + \sigma^{*} \mathbf{I}) \mathbf{x} + \mathbf{b}=\mathbf{0}$.\\
\textbf{Output:} the solution $\mathbf{x}$. 

\textbf{Details for Step 1.} The Krylov subspace is one of the most popular iterative methods in solving eigen problems with $\mathcal{O}(m n^2)$ computational cost \cite{stewart2002krylov}. The Lanczos decomposition for a real symmetric matrix $\mathbf{B}$ satisfies
\begin{equation*}
    \mathbf{B}\mathbf{U}_{k} = \mathbf{T}_{k}\mathbf{U}_{k}+ \beta_{k} \mathbf{u}_{k+1} \mathbf{e}_{k}^{\mathrm{T}},
\end{equation*}
where $\mathbf{U}_{k} \in \mathbb{R}^{n \times k}$ is an orthonormal matrix (i.e., $\mathbf{U}_{k}^{\mathrm{T}}\mathbf{U}_k = \mathbf{I}_{k}$), $\mathbf{T}_{k} \in \mathbb{R}^{k \times k}$ is a symmetric tridiagonal matrix and $\mathbf{e}_{k} \in \mathbb{R}^{k}$ is the $k$-th
standard basis vector in $\mathbb{R}^{k}$. Lanczos observed that even for comparatively small $k$, $\mathbf{T}_{k}$ approximates $\mathbf{B}$ very well in terms of eigenvalues and eigenvectors. Specifically, for a suitable eigenpair $(\gamma, \mathbf{w})$ of $\mathbf{T}_{k}$ with $\mathbf{T}_{k}\mathbf{w} = \gamma \cdot \mathbf{w}$, the pair $(\gamma, \mathbf{U}_{k}\mathbf{w})$ is an approximate eigenpair of $\mathbf{B}$, i.e., $\mathbf{B}\mathbf{z} \approx \gamma \cdot \mathbf{z}$ with $\mathbf{z} = \mathbf{U}_{k}\mathbf{w}$. Here, the Krylov subspace is constructed by $\mathbf{u}_{1}$, the first column of $\mathbf{U}_{k}=[\mathbf{u}_1, \cdots, \mathbf{u}_k]$, i.e., $\mathcal{K}_{k}(\mathbf{B}, \mathbf{u}_1)$. Note that $\mathbf{T}_{k}$ approximates $\mathbf{B}$ for eigenvalues with largest modulus (or absolute values) and the corresponding eigenvectors. Empirically, for calculating $m$ eigenvalues of $\mathbf{B}$ with largest absolute values and the corresponding eigenvectors, we usually construct the Krylov subspace $\mathcal{K}_k(\mathbf{B},\mathbf{u}_1)$ with dimension $k = \max\{2m,20\}$. The base vector $\mathbf{u}_1$ is also essential for the Krylov subspace method. Moreover, restarting is adopted to iteratively update the base vector $\mathbf{u}_1$. To the best of our knowledge, the mentioned iterative method for partial eigen informnation is supported in many softwares, e.g., Matlab (eigs function) and Python (Scipy package) etc. For more details, please refer to ARPACK \cite{lehoucq1998arpack}. Returning back to the proposed algorithm with $\mathbf{A}$, instead of calculating the largest (in terms of the absolute value) $m$ eigenvalues of $\mathbf{A}$, we aim to get $m$ (algebraically) smallest eigenvalues $\{\lambda_1, \cdots, \lambda_m\}$ and the corresponding eigenvectors $\{\mathbf{v}_1, \cdots, \mathbf{v}_{m}\}$. We first roughly calculate a shift value $\beta \geq \|\mathbf{A}\|$ by several steps of power iteration (Hessian-vector products). Then, let $\mathbf{B} = \beta \cdot \mathbf{I} - \mathbf{A}$, whose eigenvalues  $\{\beta - \lambda_{n}, \cdots, \beta - \lambda_{1}\}$ are non-negative and the corresponding eigenvectors are $\{\mathbf{v}_{n}, \cdots, \mathbf{v}_{1}\}$. Applying the mentioned Krylov subspace algorithm, we first obtain an estimate of shifted eigenvalues $\{\lambda_1, \cdots, \lambda_m\}$ and the corresponding eigenvectors $\{\mathbf{v}_1, \cdots, \mathbf{v}_{m}\}$ for $\mathbf{A}$,  since $\{\beta - \lambda_{m}, \cdots, \beta-\lambda_{1}\}$ are largest eigenvalues of $\mathbf{B}$. 
To further lower the computational cost, we may adopt $k$-dimensional Krylov subspace $\mathcal{K}_{k}(\mathbf{B}, \mathbf{u}_1)$ for $m$ eigenvalues without restarting in implementation with $k=m$.

\textbf{Details for Step 2.} Instead of directly solving (\ref{first_order_truncated_secular_equation}), Cartis et al. \cite{cartis2011adaptive} recommended to find the root for the equivalent equation:
\begin{equation}
\label{first_order_truncated_secular_equation2}
    \Tilde{w}_1(\sigma;\mu) = \sqrt{\sum_{i=1}^{m} \frac{c_i^2}{(\lambda_i + \sigma)^2} + \sum_{i=m+1}^{n} \frac{c_i^2}{(\mu + \sigma)^2}} - \frac{\sigma}{\rho},
\end{equation}
which is convex on $(-\lambda_{1},+\infty)$. Moreover, under perfect initialization, Newton method is proved to achieve (locally) quadratic convergence. However, we numerically find that it depends much on the initialization and may converge to a point outside the feasible domain $(-\lambda_{1},+\infty)$ if it has imperfect initialization. Here, we recommend to use the bisection method to find the root of (\ref{first_order_truncated_secular_equation}) or (\ref{first_order_truncated_secular_equation2}) due to its linear convergence, stability and ease of implementation. For the weighted average $\mu$ defined in (\ref{specific_choice_mu_second}), we can rewrite it as a more tractable but equivalent form:
\begin{equation}
\label{mu_2}
    \mu_2 = \frac{\sum_{i=m+1}^{n} c_i^2 \cdot \lambda_i}{\sum_{i=m+1}^{n} c_i^2} = \frac{\mathbf{b}^{\mathrm{T}}(\mathbf{A} - \mathbf{V}_{m}\mathbf{\Lambda}_{m}\mathbf{V}_m^{\mathrm{T}})\mathbf{b}}{\|\mathbf{b}\|^2 - \sum_{i=1}^{m} c_i^2} = \frac{\mathbf{b}^{\mathrm{T}}\mathbf{A}\mathbf{b} - \sum_{i=1}^{m}c_i^2 \cdot \lambda_i}{\|\mathbf{b}\|^2 - \sum_{i=1}^{m} c_i^2},
\end{equation}
where $\mathbf{V}_{m}=[\mathbf{v}_1, \cdots, \mathbf{v}_m]$ and $\mathbf{\Lambda_{m}} = \text{diag}(\lambda_1, \cdots, \lambda_m)$.

\textbf{Details for Step 3.} There are many well-studied, efficient and reliable iterative methods for (real symmetric) linear systems, e.g., Krylov subspace (Lanczos) methods and conjugate gradient methods etc. We adopt the Lanczos method for solving the linear system $(\mathbf{A} + \sigma^{*} \mathbf{I}) \mathbf{x} + \mathbf{b}=\mathbf{0}$, where only a few steps of Hessian-vector products are required.

In summary, the main computational cost comes from Step 1 and Step 3 for Hessian-vector products $(\mathcal{O}(m n^2))$, since solving the root of $w_1(\sigma;\mu)$ is a 1-dimensional problem in Step 2 and is of cost $\mathcal{O}(n)$. Therefore, the total computational cost for the proposed algorithm is of $\mathcal{O}(m n^2)$, much lower than the method based on full eigendecomposition ($\mathcal{O}(n^3)$).

\textbf{The selection of $m$.} The choice of the parameter $m$ is important to our CRS solver: a larger $m$ yields a better CRS solution quality but incurs a higher computational cost. If $\mathbf{A}$ is a Gaussian random matrix, by~\eqref{error_bound_random_gaussian}, we can achieve $\varepsilon$-accuracy (i.e., $|\sigma^{*}_{1} - \sigma^{*}| \leq \varepsilon$) if $m \leq n$ satisfies
\begin{equation*}
    \left(\frac{3\pi}{4\sqrt{2}}\right)^{2/3} \cdot \left(1-\frac{m+1}{n}\right)^{2/3} \leq \varepsilon.
\end{equation*}
However, the error bound (\ref{error_bound_random_gaussian}) provides only a conservative sufficient conditions for $m$. Moreover, for general problems without the Gaussian assumption on $\mathbf{A}$, it is hard to choose $m$ based on the $\varepsilon$-accuracy, because the error bounds (\ref{error_bound_first}) and (\ref{error_bound_second}) are implicit in $m$. Therefore, adaptive methods (or heuristic methods) for selecting $m$ are necessary in practice. A natural way is to check the suboptimality (gradient norm) in each step, and enlarge $m$ by $m_{0}$ (i.e., we set $m=\max \{m+m_0, m_{\max}\}$, where $m_{\max}$ is the maximal number of eigenvalues we adopt in ASEM), if the output does not satisfy the given condition for suboptimality. Moreover, numerical experiments on CUTEst problems (see Experiment 6 in Section~\ref{numerical_results}) shows that $m=1$ is enough for most of the cases. We left the study for the selection of $m$ as future work. To the best of our knowledge, the Krylov subspace method \cite{cartis2011adaptive, carmon2018analysis} for CRS suffers from a similar issue of hyperparameter selection. 

\section{Experimental Results}
\label{numerical_results}
Without the loss of generality (see Appendix \ref{appendix_equivalence_cubic_problems}), we assume that $\mathbf{A}$ is diagonal in the synthetic CRS instances, for simplicity and fair comparison. Furthermore, we also test the proposed ASEM on CUTEst library \cite{gould2015cutest}. All experiments were run on a Macbook Pro M1 laptop. For more experimental details, please refer to Appendix \ref{experimental_details}.

\textbf{Experiment 1.} The distribution for eigenvalues of the matrix $\mathbf{A}$. In the error analysis (Theorem \ref{error_analysis_first} and Theorem \ref{error_analysis_second}), the error is controlled by the approximation of $\mu$ to unobserved eigenvalues $\{\lambda_{i}\}_{i=m+1}^{n}$, i.e., $\max_{m+1 \leq i \leq n}|\lambda_{i} - \mu|$ and $\max_{m+1 \leq i \leq n}(\lambda_i - \mu)^2$. It further implies that the distribution of eigenvalues $\{\lambda_{i}\}_{i=m+1}^{n}$ is essential for the proposed method. Intuitively, if eigenvalues $\{\lambda_{i}\}_{i=m+1}^{n}$ 
cluster around a small interval, then the rough estimate (\ref{mu_first}) for $\mu$ is enough to approximate the unknown eigenvalues well. Conversely, if eigenvalues $\{\lambda_{i}\}_{i=m+1}^{n}$ spread across a large interval, then we cannot expect a single $\mu$ to estimate all eigenvalues $\{\lambda_{i}\}_{i=m+1}^{n}$. Here, we have four specially designed cases for distributions of eigenvalues of the matrix $\mathbf{A}$ to illustrate our theoretical observations for the proposed method. Case 1 (evenly spaced): all eigenvalues  $\{\lambda_{i}\}_{i=1}^{n}$ are evenly spaced in $[-1,1]$; Case 2 (separated): half of eigenvalues are far away from the remaining, i.e., eigenvalues $\{\lambda_{i}\}_{i=1}^{n/2}$ are evenly spaced in $[-1,-4/5]$ and the remaining eigenvalues $\{\lambda_{i}\}_{i=n/2+1}^{n}$ are evenly spaced in $[4/5,1]$; Case 3 (right centered): the minimal $2\%$ of eigenvalues and the remaining $98\%$ of eigenvalues gather together respectively, i.e., eigenvalues $\{\lambda_{i}\}_{i=1}^{n/50}$ and $\{\lambda_{i}\}_{i=n/50+1}^{n}$ are spaced evenly in $[-1,4/5]$ and $[4/5,1]$ respectively; Case 4 (left centered): the maximal $2\%$ of eigenvalues and the remaining $98\%$ of eigenvalues gather together respectively, i.e., eigenvalues $\{\lambda_{i}\}_{i=1}^{49/50n}$ and $\{\lambda_{i}\}_{i=49/50n+1}^{n}$ are evenly spaced in $[-1,4/5]$ and $[4/5,1]$ respectively. The vector $\mathbf{b}$ is proportional to $[1, \cdots, 1]^{\mathrm{T}}$ with $\|\mathbf{b}\| = 0.1$. The remaining parameters are $n = 5 \times 10^{3}$ and $\rho = 0.1$. Here we adopt the first-order ASEM (i.e., $\mu$ is defined in (\ref{mu_first})). Figure \ref{num_eigen_distribution} validates our theories that the proposed algorithm converges fast if unknown eigenvalues $\{\lambda_{i}\}_{i=m+1}^{n}$ are close. Moreover, without the need to compute all eigenvalues, partial eigen information is enough to achieve satisfactory solutions in practice, except the hard case (e.g., Case 4).


\begin{minipage}{\linewidth}
\centering
\begin{minipage}[t]{0.3\linewidth}
\begin{figure}[H]
    \centering
    \includegraphics[width= \textwidth]{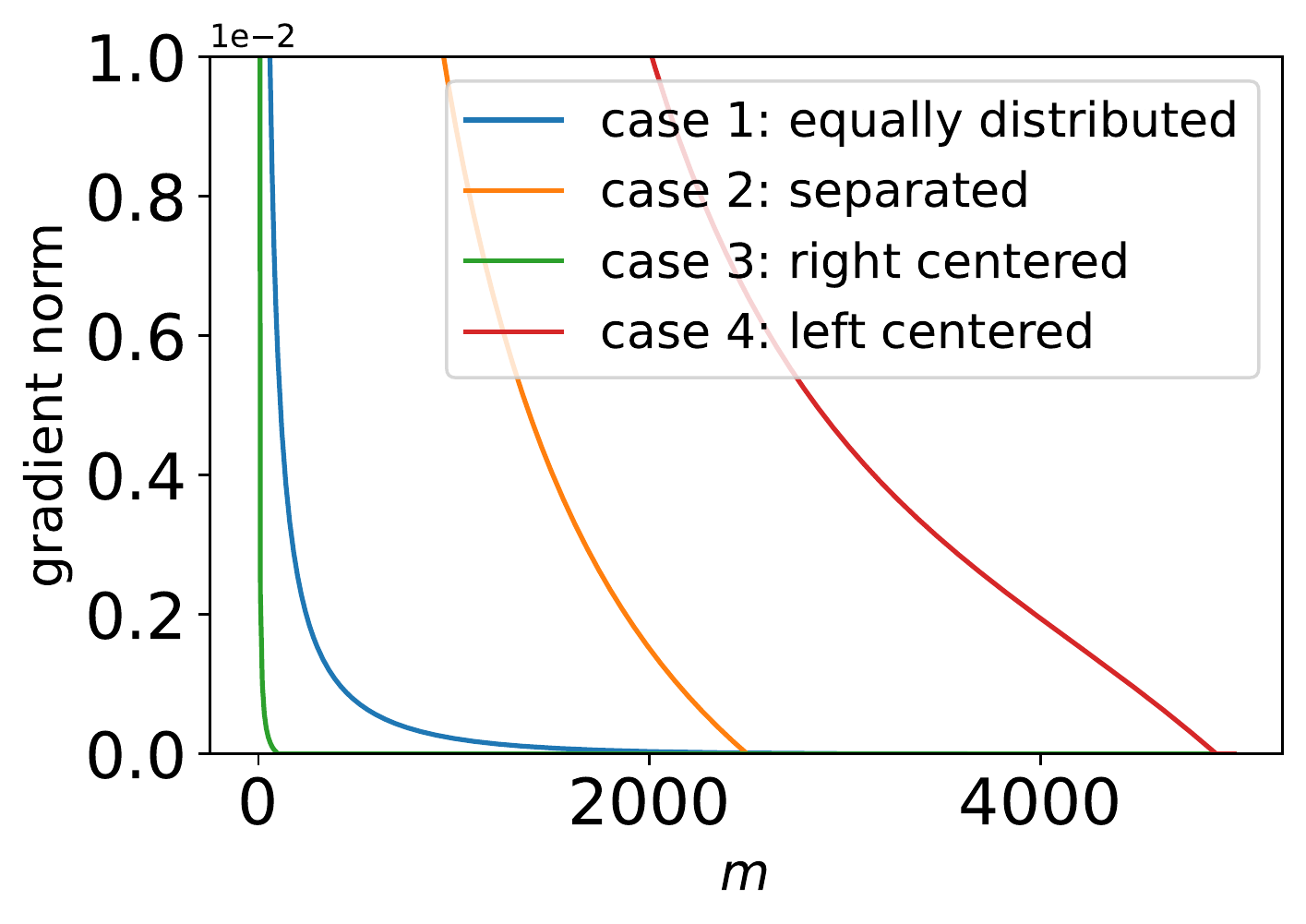}
    \caption{Trajectories of suboptimality (gradient norm $\|\nabla f_{\mathbf{A},\mathbf{b},\rho}(\mathbf{x})\|$) with different distributions for eigenvalues in  Experiment 1.}
    \label{num_eigen_distribution}
\end{figure}
\end{minipage}
\hspace{0.5em}
\begin{minipage}[t]{0.3\linewidth}
\begin{figure}[H]
    \centering
    \includegraphics[width= \textwidth]{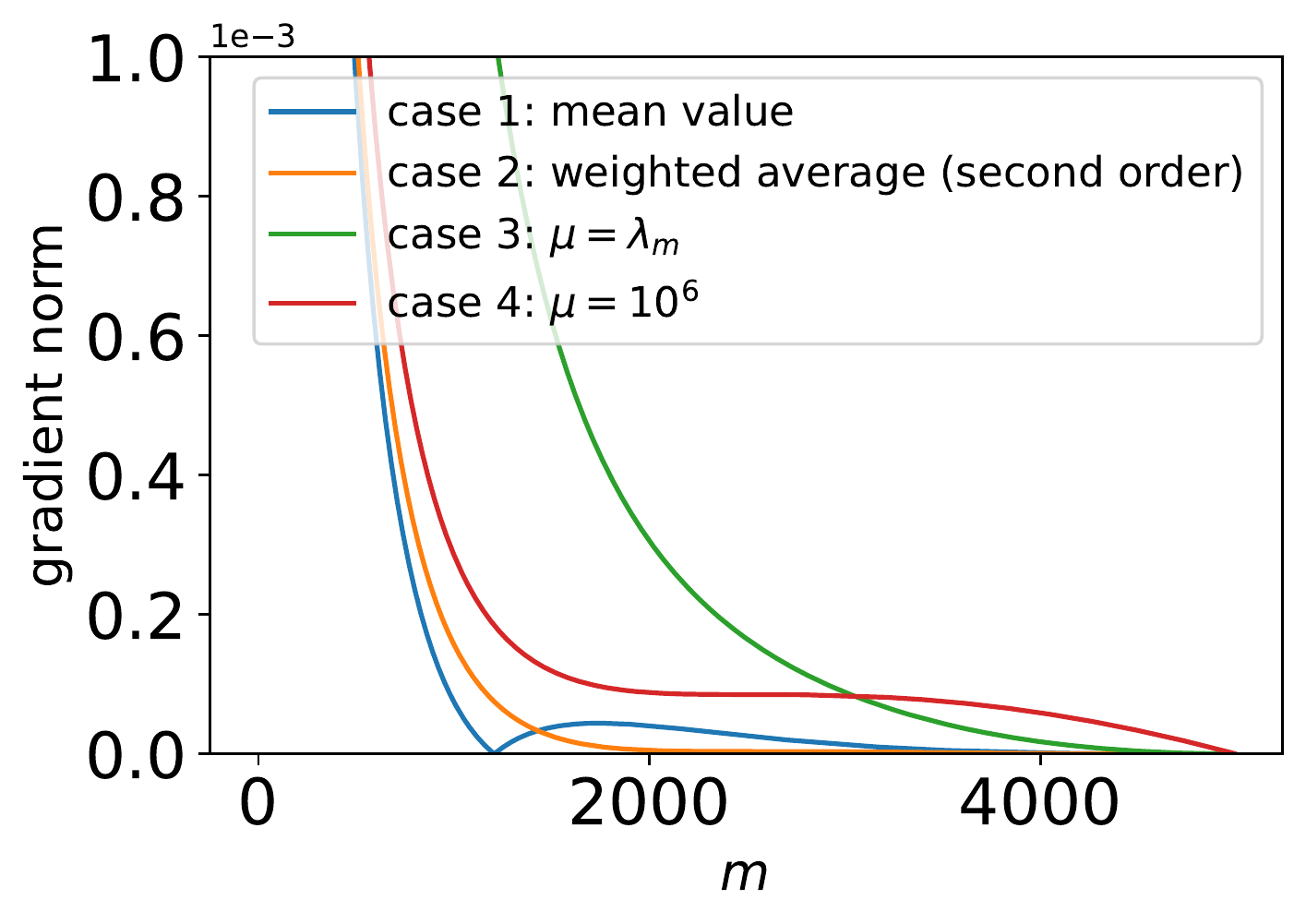}
    \caption{Trajectories of suboptimality (gradient norm $\|\nabla f_{\mathbf{A},\mathbf{b},\rho}(\mathbf{x})\|$) with different $\mu$ in Experiment 2.}
    \label{num_eigen_mu}
\end{figure}
\end{minipage}
\hspace{0.5em}
\begin{minipage}[t]{0.3\linewidth}
\begin{figure}[H]
    \centering
    \includegraphics[width= \textwidth]{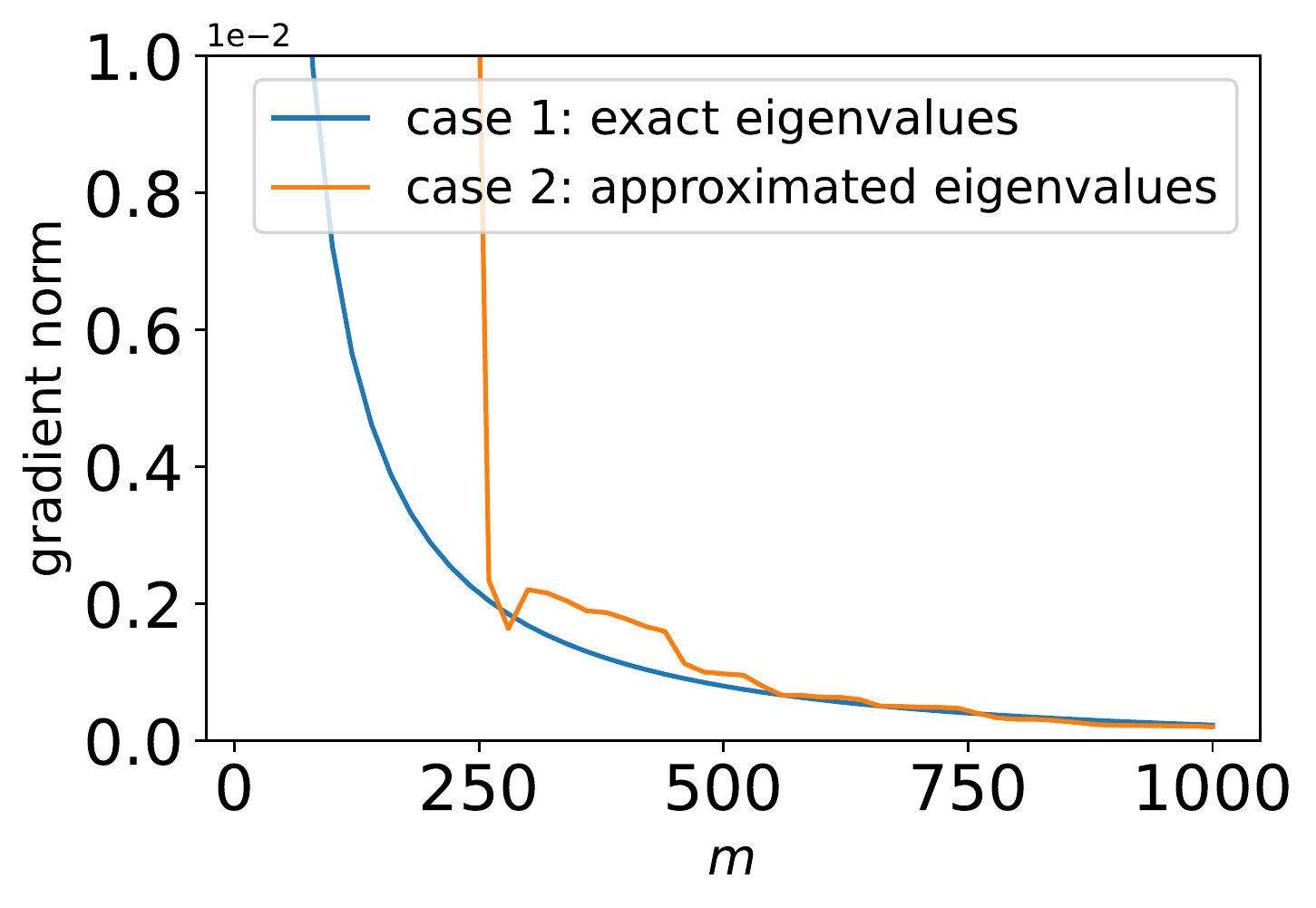}
    \caption{Trajectories of suboptimality (gradient norm $\|\nabla f_{\mathbf{A},\mathbf{b},\rho}(\mathbf{x})\|$) with exact and approximated eigenvalues and eigenvectors in Experiment 3.}
    \label{num_eigen_exac_appr}
\end{figure}
\end{minipage}
\end{minipage}

\textbf{Experiment 2.} The effect of the parameter $\mu$ (first-order and second-order ASEMs). For the first- and second-order ASEMs, we define $\mu$ according to \eqref{mu_first} and \eqref{specific_choice_mu_second} respectively. Here, we test the effect of $\mu$ for the proposed method in solving the cubic regularized quadratic problem, with other parameters fixed. Case 1 (first-order ASEM): $\mu$ is adopted as the mean value of unknown eigenvalues, defined in (\ref{mu_first}); Case 2 (second-order ASEM): $\mu$ is selected as the weighted average of unknown eigenvalues with weights $c_i^2$ (see (\ref{specific_choice_mu_second})); Case 3 (first-order ASEM): $\mu = \lambda_{m}$, the maximal eigenvalue we observe; Case 4: $\mu = 10^{6}$, much larger than the eigenvalues of $\mathbf{A}$, as an approximation to $+\infty$. The vector $\mathbf{b}$ is proportional to $[\lambda_1, \cdots, \lambda_{n}]^{\mathrm{T}}$ with length $\|\mathbf{b}\|=0.1$. 
Eigenvalues of the matrix $\mathbf{A}$ are evenly spaced in $[-1,1]$. 
The remaining parameters are $d = 5 \times 10^{3}$ and $\rho = 0.1$.
Figure \ref{num_eigen_mu} shows the superiority of the second-order ASEM over the first-order ASEM that it is more stable and converges faster when $m$ is large, consistent with Theorem~\ref{error_analysis_first} and Theorem~\ref{error_analysis_second}. 
Moreover, the results further imply the importance of the choice of $\mu$. There are several observations from Figure~\ref{num_eigen_mu}. Firstly, we cannot discard the residual term with the unknown eigenvalues $\{\lambda_{i}\}_{i=m+1}^{n}$, where they still contain much information, as is shown in Case~1 and Case~4. Secondly, random selection of $\mu$ does not work well and may even cause divergence (e.g., see Case~3 and Case~4).  Furthermore, a suitable choice of $\mu$ leads to a well-behaved algorithm (e.g., see Case~1 and Case~2).

\textbf{Experiment 3.} Approximation capabilities of the Krylov subspace method for ASEM. As is introduced in Section~\ref{implementation_details}, we adopt $m$-dimensional Krylov subspace $\mathcal{K}_{m}(\mathbf{B},\mathbf{u}_1)$ to approximately calculate $m$ algebraically smallest eigenvalues $\{\lambda_1, \cdots, \lambda_{m}\}$ and the corresponding eigenvectors $\{\mathbf{v}_1, \cdots, \mathbf{v}_{m}\}$. We now investigate the performance of ASEM with estimated eigenvalues and eigenvectors. The vector $\mathbf{b}$ is proportional to the vector $[1, \cdots, 1]^{\mathrm{T}}$ with length $\|\mathbf{b}\|=0.1$. Eigenvalues of the matrix $\mathbf{A}$ are evenly spaced in $[-1,1]$. The remaining parameters are $n = 5 \times 10^{3}$ and $\rho = 0.1$. We adopt the first-order ASEM (i.e., $\mu$ is defined in (\ref{mu_first})) here. Trajectories for suboptimality with exact and approximated eigenvalues and eigenvectors are shown in Figure~\ref{num_eigen_exac_appr}. The Krylov subspace $\mathcal{K}_{m}(\mathbf{B},\mathbf{u}_1)$ with relatively low dimension $m$ for ASEM matches well with ASEM with exact eigenvalues. This experiment justifies the use of the $m$-dimensional Krylov subspace for $m$ eigenvalues and eigenvectors in ASEM. 

\textbf{Experiment 4.} Comparison of ASEM with the Krylov subspace method \cite{cartis2011adaptive, carmon2018analysis} and the gradient descent method \cite{carmon2019gradient} on synthetic problems. For large-scale problems, the Krylov subspace method and the gradient descent method are two state-of-the-art methods for CRS~\eqref{cubic_minimization}. In this experiment, we compare the proposed ASEM against the Krylov subspace method and the gradient descent method. The dominant computation steps for these three methods are Hessian-vector products ($\mathcal{O}(m  n^2)$). The vector $\mathbf{b}$ is proportional to the vector $[1, \cdots, 1]^{\mathrm{T}}$ with length $\|\mathbf{b}\|=0.1$. Eigenvalues of the matrix $\mathbf{A}$ are evenly spaced in $[-1,1]$. Similar to the setting in \cite{carmon2018analysis}, we define the condition number for (\ref{cubic_minimization}) as $\kappa = \frac{\lambda_{n} + \rho \cdot \|\mathbf{x}^{*}\|}{\lambda_{1} + \rho \cdot \|\mathbf{x}^{*}\|}=\frac{\lambda_{n} + \sigma^{*}}{\lambda_{1} + \sigma^{*}}$. 
Then, we have $\sigma^{*} = \frac{\lambda_{n}- \kappa \cdot \lambda_{1}}{\kappa - 1}$ and $\rho = \frac{\sigma^{*}}{\|(\mathbf{A}+\sigma^{*}\mathbf{I})^{-1}\mathbf{b}\|}$. Case 1: easy case that $\kappa=10^{3}$; Case 2: harder case that $\kappa = 10^{6}$. The remaining parameter is $n = 5 \times 10^{3}$. 
We adopt the first-order ASEM (i.e., $\mu$ is defined in (\ref{mu_first})). As shown in Figure~\ref{num_eigen_compar}, ASEM outperforms both the gradient descent method and the Krylov subspace method when $m$ is relatively large. It is reasonable that ASEM underperforms when $m$ is small since the $m$-dimensional Krylov subspace cannot well approximate eigenvalues and eigenvectors of $\mathbf{A}$. The results further demonstrate the performance of the proposed ASEM method.

\begin{figure}[t]
  \centering
  \subfloat[Case 1: $\kappa=10^{3}$.] {
     \label{num_eigen_compar_1}     
    \includegraphics[width=0.45\textwidth]{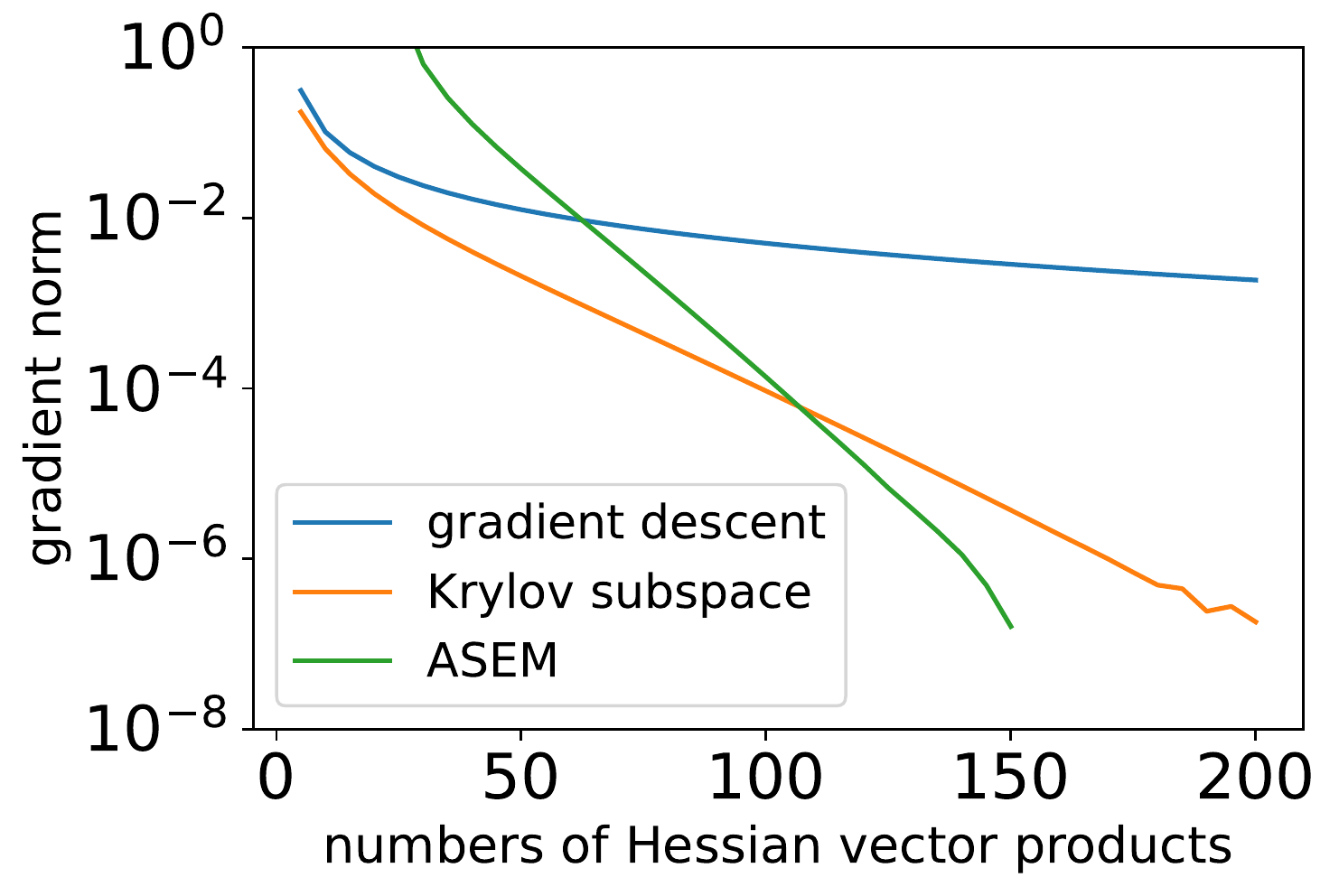}
    }\
    \subfloat[Case 2: $\kappa=10^{6}$.]{
    \label{num_eigen_compar_2}   
    \includegraphics[width=0.45\textwidth]{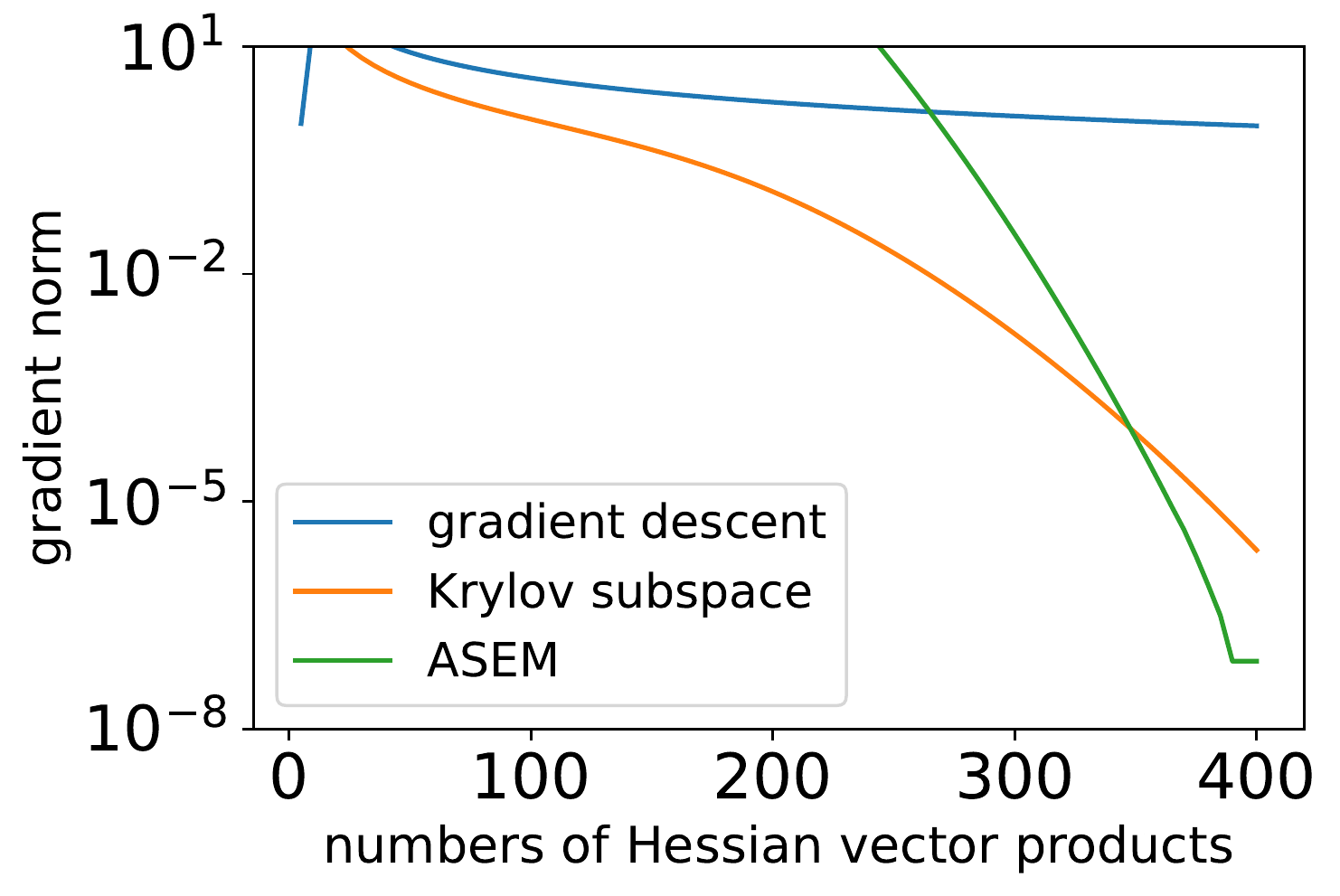}
    }\
    \caption{Trajectories of suboptimality (gradient norm $\|\nabla f_{\mathbf{A},\mathbf{b},\rho}(\mathbf{x})\|$) for ASEM, the Krylov subspace method and the gradient descent method in Experiment 4.}
    \label{num_eigen_compar}
    \vskip -0.1in
\end{figure}

\textbf{Experiment 5.} Comparison of ASEM with the Cauchy point method \cite{conn2000trust}, the gradient descent method, and the Krylov subspace method on the CUTEst problems \cite{gould2015cutest}. The CUTEst library collects various unconstrained and constrained optimization problems that arise in real applications. In this part, we compare the numerical performances of the ARC algorithm \cite{cartis2011adaptive} on four unconstrained optimization problems from the CUTEst library, where subproblems are solved by the Cauchy point method (ARC-CP), the gradient descent method (ARC-GD), the Krylov subspace method (ARC-Krylov($k$), where $k$ is the number of Lanczos basis vectors) and the ASEM method (ARC-ASEM($m$), where $m$ is the number of eigenvalues for ASEM). The architecture of the ARC algorithm is provided in Appendix \ref{experimental_details_arc}. 
Four unconstrained optimization problems (e.g., TOINTGSS, BRYBAND, DIXMAANG and TQUARTIC) in the CUTEst library are adopted for testing, where the dimensions are 1000, 2000, 3000 and 5000, respectively. 
We use the first-order ASEM ($\mu$ is defined in (\ref{mu_first})) here, since we found that the performances of the first-order ASEM and the second-order ASEM does not differ much for these problems. Numerical results are reported in Table~\ref{table_results_cutest}, where $\mathbf{x}_{\text{out}}$, $\|\nabla f(\mathbf{x}_{\text{out}})\|$, $\lambda_1(\nabla^2 f(\mathbf{x}_{\text{out}}))$, iter and time represent the output of the ARC algorithm, the suboptimality (gradient norm), the minimal eigenvalue of the Hessian matrix, number of iterations for the ARC and CPU time, respectively. Here are several observations. Firstly, the proposed ASEM algorithm outperforms others in most cases and is comparable to the Krylov subspace method sometimes, where the ASEM achieves a worse suboptimality (gradient norms) or CPU time. Furthermore, only one eigenvalue is enough for the ASEM to perform well (i.e., ARC-ASEM($m$) with $m=1$), which is surprising. For more experimental details, please refer to Appendix~\ref{experimental_details}. 

\begin{table}[h!]
  \caption{Results on CUTEst problems in Experiment 5. }
  \label{table_results_cutest}
  \centering
  \begin{tabular}{c|l|lllll}
    \toprule
    Problem & Method & $f(\mathbf{x}_{\text{out}})$ &   $\|\nabla f(\mathbf{x}_{\text{out}})\|$ & $\lambda_1(\nabla^2 f(\mathbf{x}_{\text{out}}))$ & iter & time(s)
    \\
    \hline
    & ARC-CP & 3.60E+14 & 4.12E-05 & 1.40E-16 & 1000 & 6.08
    \\
    & ARC-GD & 3.60E+14 & 1.42E-06 & 3.89E-16 & 100 & 6.98
    \\
    TOINTGSS & ARC-Krylov($1$) & 3.60E+14 & 4.12E-05 & 1.20E-15 & 300 & 6.75
    \\
    ($n=1000$) & ARC-Krylov($10$) & 3.60E+14 &2.20E-08 & 1.29E-15 & \underline{19} & \textbf{1.87}
    \\
    & ARC-ASEM($1$) & 3.60E+14 & \textbf{8.01E-10} & -1.63E-15 & \underline{19} & \underline{2.17}
    \\
    & ARC-ASEM($10$) & 3.60E+14 & \underline{8.17E-10} & -7.67E-16 & \underline{19} & 2.74
    \\
    \hline
    & ARC-CP & 7.49E+14 & 1.10E-03 & 5.40E+00 & 1000 & 8.27
    \\
    & ARC-GD & \textbf{1.25E+05} & 4.93E+03 & 4.40E+02 & 100 & 11.05   
    \\
    BRYBAND & ARC-Krylov($10$) & 7.49E+14 & 6.60E-06 & 5.40E+00 & 100 & 9.85
    \\
    ($n=2000$) & ARC-Krylov($30$) & 7.49E+14 & 1.14E-07 & 5.40E+00 & \underline{14} & \underline{2.37}
    \\
    & ARC-ASEM($1$) & 7.49E+14 & \underline{1.02E-07} & 5.40E+00 & \underline{14} & \textbf{2.24}
    \\
    & ARC-ASEM($10$) & 7.49E+14 & \textbf{1.01E-07} & 5.40E+00 & \underline{14} & 3.83
    \\
    \hline
    & ARC-CP & 1.00E+00 & 3.13E-04 & 6.67E-04 & 2000 & 35.16
    \\
    & ARC-GD & 1.00E+00 & 9.24E-05 & 6.67E-04 & 200 & 33.83
    \\
    DIXMAANG & ARC-Krylov($10$) & 1.00E+00 & 3.44E-05 & 6.67E-04 & 500 & 32.18
    \\
    ($n=3000$) & ARC-Krylov($30$) & 1.00E+00 & 9.06E-09 & 6.67E-04 & 46 & \textbf{6.65}
    \\
    & ARC-ASEM($1$) & 1.00E+00 & \underline{5.53E-09} & 6.67E-04 & \textbf{30} & \underline{7.51}
    \\
    & ARC-ASEM($10$) & 1.00E+00 & \textbf{4.85E-09} & 6.67E-04 & \underline{42} & 18.74
    \\
    \hline
    & ARC-CP & 8.04E-01 & 6.10E-02 & -5.41E-05 & 500 & 71.92
    \\
    & ARC-GD & 8.05E-01 & 2.77E-02 & -4.80E-05 & 100 & 98.14 
    \\
    TQUARTIC & ARC-Krylov($1$) & 8.05E-01 & 2.76E-02 & -4.71E-05 & 100 & 29.43
    \\
    ($n=5000$) & ARC-Krylov($10$) & \textbf{5.05E-14} & \textbf{8.48E-09} & 4.00E-04 & \underline{46} & \underline{16.09}
    \\
    & ARC-ASEM($1$) & \underline{7.43E-14} & \underline{9.62E-09} & 4.00E-04 & \underline{46} & \textbf{15.47}
    \\
    & ARC-ASEM($10$) & \underline{7.43E-14} & \underline{9.62E-09} & 4.00E-04 & \underline{46} & 16.18
    \\
    \bottomrule
  \end{tabular}
\end{table}

\section{Conclusion}
We develop the first-order and the second-order truncated secular equations as surrogates to the secular equation with full eigendecomposition in solving the CRS~(\ref{cubic_minimization}). The proposed ASEM is an efficient alternative to existing methods for solving CRS since it reduces the computational cost from $\mathcal{O}(n^3)$ to $\mathcal{O}(m n^2)$. Our CRS solvers features rigorous theoretical error bound, which related to the amount of eigen information used. We also discuss in detail the implementation of our proposed algorithm ASEM. In particular, we show how only Hessian-vector products are needed, but not matrix inversion. Numerical experiments are conducted to further investigate properties and performance of the proposed ASEM and corroborate with the theoretical results. From our experiments, we find that the proposed ASEM is more efficient than the state-of-the-art methods on synthetic and CUTEst problems.

\section*{Acknowledgement}
We would like to thank the anonymous reviewers and chairs for their helpful comments. Michael K. Ng is supported by Hong Kong Research Grant Council GRF 12300218, 12300519, 17201020, 17300021, C1013-21GF, C7004-21GF and Joint NSFC-RGC N-HKU76921. Man-Chung Yue is supported by the Research Grants Council (RGC) of Hong Kong under the GRF project 15305321.

\bibliographystyle{plain}
\bibliography{reference}

\begin{thebibliography}{10}

\bibitem{carmon2019gradient}
Yair Carmon and John Duchi.
\newblock Gradient descent finds the cubic-regularized nonconvex {N}ewton step.
\newblock {\em SIAM Journal on Optimization}, 29(3):2146--2178, 2019.

\bibitem{carmon2018analysis}
Yair Carmon and John~C Duchi.
\newblock Analysis of {K}rylov subspace solutions of regularized non-convex
  quadratic problems.
\newblock {\em Advances in Neural Information Processing Systems}, 31, 2018.

\bibitem{cartis2011adaptive}
Coralia Cartis, Nicholas~IM Gould, and Philippe~L Toint.
\newblock Adaptive cubic regularisation methods for unconstrained optimization.
  {P}art {I}: motivation, convergence and numerical results.
\newblock {\em Mathematical Programming}, 127(2):245--295, 2011.

\bibitem{cartis2011adaptive2}
Coralia Cartis, Nicholas~IM Gould, and Philippe~L Toint.
\newblock Adaptive cubic regularisation methods for unconstrained optimization.
  {P}art {II}: worst-case function-and derivative-evaluation complexity.
\newblock {\em Mathematical Programming}, 130(2):295--319, 2011.

\bibitem{conn2000trust}
Andrew~R Conn, Nicholas~IM Gould, and Philippe~L Toint.
\newblock {\em Trust Region Methods}.
\newblock SIAM, 2000.

\bibitem{forrester2010log}
Peter~J Forrester.
\newblock {\em Log-{G}ases and {R}andom {M}atrices}.
\newblock Princeton University Press, 2010.

\bibitem{gould2015cutest}
Nicholas~IM Gould, Dominique Orban, and Philippe~L Toint.
\newblock Cutest: a constrained and unconstrained testing environment with safe
  threads for mathematical optimization.
\newblock {\em Computational Optimization and Applications}, 60(3):545--557,
  2015.

\bibitem{griewank1981modification}
Andreas Griewank.
\newblock The modification of newton’s method for unconstrained optimization
  by bounding cubic terms.
\newblock Technical report, Technical Report NA/12, 1981.

\bibitem{jiang2022cubic}
Ru-Jun Jiang, Zhi-Shuo Zhou, and Zi-Rui Zhou.
\newblock Cubic regularization methods with second-order complexity guarantee
  based on a new subproblem reformulation.
\newblock {\em Journal of the Operations Research Society of China}, pages
  1--36, 2022.

\bibitem{jiang2021accelerated}
Rujun Jiang, Man-Chung Yue, and Zhishuo Zhou.
\newblock An accelerated first-order method with complexity analysis for
  solving cubic regularization subproblems.
\newblock {\em Computational Optimization and Applications}, 79(2):471--506,
  2021.

\bibitem{lehoucq1998arpack}
Richard~B Lehoucq, Danny~C Sorensen, and Chao Yang.
\newblock {\em ARPACK {U}sers' {G}uide: {S}olution of {L}arge-{S}cale
  {E}igenvalue {P}roblems with {I}mplicitly {R}estarted {A}rnoldi {M}ethods}.
\newblock SIAM, 1998.

\bibitem{nesterov2006cubic}
Yurii Nesterov and Boris~T Polyak.
\newblock Cubic regularization of {N}ewton method and its global performance.
\newblock {\em Mathematical Programming}, 108(1):177--205, 2006.

\bibitem{stewart2002krylov}
Gilbert~W Stewart.
\newblock A {K}rylov-{S}chur algorithm for large eigenproblems.
\newblock {\em SIAM Journal on Matrix Analysis and Applications},
  23(3):601--614, 2002.

\bibitem{tripuraneni2018stochastic}
Nilesh Tripuraneni, Mitchell Stern, Chi Jin, Jeffrey Regier, and Michael~I
  Jordan.
\newblock Stochastic cubic regularization for fast nonconvex optimization.
\newblock {\em Advances in Neural Information Processing Systems}, 31, 2018.

\bibitem{wang2020cubic}
Zhe Wang, Yi~Zhou, Yingbin Liang, and Guanghui Lan.
\newblock Cubic regularization with momentum for nonconvex optimization.
\newblock In {\em Uncertainty in Artificial Intelligence}, pages 313--322.
  PMLR, 2020.

\bibitem{weiser2007affine}
Martin Weiser, Peter Deuflhard, and Bodo Erdmann.
\newblock Affine conjugate adaptive {N}ewton methods for nonlinear
  elastomechanics.
\newblock {\em Optimisation Methods and Software}, 22(3):413--431, 2007.

\bibitem{yue2019quadratic}
Man-Chung Yue, Zirui Zhou, and Anthony Man-Cho~So.
\newblock On the quadratic convergence of the cubic regularization method under
  a local error bound condition.
\newblock {\em SIAM Journal on Optimization}, 29(1):904--932, 2019.

\end{thebibliography}

\section*{Checklist}

\begin{enumerate}

\item For all authors...
\begin{enumerate}
  \item Do the main claims made in the abstract and introduction accurately reflect the paper's contributions and scope?
    \answerYes{}
  \item Did you describe the limitations of your work?
    \answerYes{As is mentioned in the last paragraph of section \ref{implementation_details}, the proposed ASEM algorithm imvolves in the selection of $m$, which balances the accuracy and computational cost of ASEM. We left the (adaptive and heuristic) methods for the selection of $m$ as a future work.}
  \item Did you discuss any potential negative societal impacts of your work?
    \answerYes{The proposed ASEM is for general optimization problems, and it belongs to the foundational research. We do not expect it to involve negative societal impacts.}
  \item Have you read the ethics review guidelines and ensured that your paper conforms to them?
    \answerYes{}
\end{enumerate}

\item If you are including theoretical results...
\begin{enumerate}
  \item Did you state the full set of assumptions of all theoretical results?
    \answerYes{}
        \item Did you include complete proofs of all theoretical results?
    \answerYes{We provide sketch of proof following theorems and detailed proofs are available in Appendix.}
\end{enumerate}

\item If you ran experiments...
\begin{enumerate}
  \item Did you include the code, data, and instructions needed to reproduce the main experimental results (either in the supplemental material or as a URL)?
    \answerYes{Our results are reproducible and we will share our code on github later.}
  \item Did you specify all the training details (e.g., data splits, hyperparameters, how they were chosen)?
    \answerYes{Details of experiments are specified in the paper, see section 5.}
        \item Did you report error bars (e.g., with respect to the random seed after running experiments multiple times)?
    \answerNo{Our experiments do not involve random seeds.}
        \item Did you include the total amount of compute and the type of resources used (e.g., type of GPUs, internal cluster, or cloud provider)?
    \answerYes{All experiments are conducted on MacBook Pro M1. Moreover, we also report the CPUs time.}
\end{enumerate}

\item If you are using existing assets (e.g., code, data, models) or curating/releasing new assets...
\begin{enumerate}
  \item If your work uses existing assets, did you cite the creators?
    \answerYes{}
  \item Did you mention the license of the assets?
    \answerYes{Most of works are reproduced by ourselves.}
  \item Did you include any new assets either in the supplemental material or as a URL?
    \answerYes{}
  \item Did you discuss whether and how consent was obtained from people whose data you're using/curating?
    \answerYes{We mention that we test the proposed algorithm on synthetic problems and open datasets.}
  \item Did you discuss whether the data you are using/curating contains personally identifiable information or offensive content?
    \answerYes{We mention that we test the proposed algorithm on synthetic problems and open datasets.}
\end{enumerate}

\item If you used crowdsourcing or conducted research with human subjects...
\begin{enumerate}
  \item Did you include the full text of instructions given to participants and screenshots, if applicable?
    \answerNo{Our paper does not involve researches with human subjects.}
  \item Did you describe any potential participant risks, with links to Institutional Review Board (IRB) approvals, if applicable?
    \answerNo{Our paper does not involve researches with human subjects.}
  \item Did you include the estimated hourly wage paid to participants and the total amount spent on participant compensation?
    \answerNo{Our paper does not involve researches with human subjects.}
\end{enumerate}

\end{enumerate}

\newpage
\appendix

\section{Proof of Theorem \ref{error_analysis_first}}\label{appendix:A}
The secular equation (\ref{secular_equation}) can be decomposed into two parts:
\begin{equation}
    \begin{split}
        w(\sigma) & = \sum_{i=1}^{m} \frac{c_i^2}{(\lambda_i + \sigma)^2} + \sum_{i=m+1}^{n} \frac{c_i^2}{(\mu + \sigma)^2} - \frac{\sigma^2}{\rho^2} + \sum_{i=m+1}^{n} \frac{c_i^2}{(\lambda_i + \sigma)^2}  - \sum_{i=m+1}^{n} \frac{c_i^2}{(\mu + \sigma)^2}\\
        & =: w_1(\sigma;\mu) + e_1(\sigma;\mu),
    \end{split}
\end{equation}
where $e_1(\sigma;\mu)=w(\sigma) - w_1(\sigma;\mu)$ represents the error between two secular equations. 

By the Taylor's theorem, we have 
\begin{equation*}
        0 = w(\sigma^{*}) = w(\sigma_1^{*}) + \frac{d}{d \sigma}w(\xi) \cdot (\sigma^{*} - \sigma_1^{*})= w_1(\sigma_1^{*};\mu) + e_1(\sigma_1^{*};\mu) + \frac{d}{d \sigma}w(\xi) \cdot (\sigma^{*} - \sigma_1^{*}),
\end{equation*}
where $\xi \in (\sigma^{*}, \sigma_1^{*})$. By the definition that $w_1(\sigma_1^{*};\mu)=0$ and $w(\sigma)$ is monotonically decreasing in the domain (i.e., $\frac{d}{d \sigma} w(\xi) \neq 0$), we have
\begin{equation}
\label{bound_sigma_1}
    |\sigma_1^{*} - \sigma^{*}| \leq \frac{|e_1(\sigma_1^{*};\mu)|}{\left|\frac{d}{d \sigma}w(\xi)\right|}.
\end{equation}

For the error term, we can further rewrite it as
\begin{equation*}
    e_1(\sigma;\mu) = \sum_{i=m+1}^{n} \frac{c_i^2}{(\lambda_i + \sigma)^2}  - \sum_{i=m+1}^{n} \frac{c_i^2}{(\mu + \sigma)^2} = \sum_{i=m+1}^{n} \frac{-2c_i^2}{(\varsigma_i + \sigma)^3} \cdot (\lambda_i - \mu),
\end{equation*}
for some $\varsigma_i \in (\mu, \lambda_i)$, $i=m+1, \cdots, n$. Due to $\mu \geq \lambda_{m}$ and $\lambda_i \geq \lambda_m$, we have $\varsigma_i>\lambda_{m}$ and thus $\varsigma_i+\sigma > \lambda_{m} - \lambda_{1} > 0$. Therefore, the error term can be bounded by
\begin{equation}
\label{bound_error_term_first}
    |e_1(\sigma_1^{*};\mu)| \leq \frac{2\|\mathbf{b}\|^2}{(\lambda_m - \lambda_1)^3} \cdot \max_{m+1 \leq i \leq n} |\lambda_i - \mu|. 
\end{equation}
The derivative of the secular equation is
\begin{equation*}
    \frac{d}{d \sigma}w(\sigma) = \sum_{i=1}^{n} \frac{-2c_i^2}{(\lambda_i + \sigma)^3} - \frac{2}{\rho^2} \sigma < 0,
\end{equation*}
for $\sigma > \max\{-\lambda_1, 0\}$. Then, we have
\begin{equation*}
    \left| \frac{d}{d \sigma}w(\xi) \right| = \sum_{i=1}^{n} \frac{2c_i^2}{(\lambda_i + \xi)^3} + \frac{2}{\rho^2} \xi \geq \sum_{i=1}^{n} \frac{2c_i^2}{(\lambda_i + \xi)^3} \geq \frac{2\|\mathbf{b}\|^2}{\max_{1 \leq i \leq n} (\lambda_i + \xi)^3} = \frac{2\|\mathbf{b}\|^2}{(\lambda_n + \xi)^3},
\end{equation*}
and
\begin{equation*}
    \left| \frac{d}{d \sigma}w(\xi) \right| = \sum_{i=1}^{n} \frac{2c_i^2}{(\lambda_i + \xi)^3} + \frac{2}{\rho^2} \xi \geq \frac{2}{\rho^2} \xi.
\end{equation*}
Moreover, $\sigma_1^{*}$ is a root of $w_1(\sigma;\mu)=0$, which implies that
\begin{equation*}
\begin{split}
    \sigma_{1}^{*2} = \rho^2 \cdot \left( \sum_{i=1}^{m} \frac{c_i^2}{(\lambda_i + \sigma_1^{*})^2} + \sum_{i=m+1}^{n} \frac{c_i^2}{(\mu + \sigma_1^{*})^2} \right) \leq \rho^2 \cdot \|\mathbf{b}\|^2 \cdot \frac{1}{(\lambda_1 + \sigma_1^{*})^2},
\end{split}
\end{equation*}
and thus
\begin{equation*}
    0 \leq \sigma_1^{*} \leq \frac{-\lambda_1 + \sqrt{\lambda_1^2 + 4\rho \cdot \|\mathbf{b}\|}}{2}=: B_1.
\end{equation*}
Similarly, $\sigma^{*}$ is a root of $w(\sigma)=0$, which further implies that 
\begin{equation*}
    \sigma^{*2} = \rho^2 \cdot \sum_{i=1}^{n} \frac{c_i^2}{(\lambda_i + \sigma^{*})^2} \leq \rho^2 \cdot \|\mathbf{b}\|^2 \cdot \frac{1}{(\lambda_1 + \sigma^{*})^2},
\end{equation*}
and thus $0 \leq \sigma^{*} \leq B_1$. Due to $\xi \in (\sigma^{*},\sigma_1^{*})$ (or $\xi \in (\sigma_1^{*},\sigma^{*})$), we have 
\begin{equation}
    0 \leq \xi \leq B_1.
\end{equation}
Equipped with the above results, we can bound the derivative term by
\begin{equation}
\label{bound_derivative_first}
    \left| \frac{d}{d \sigma}w(\xi) \right| \geq \max \left\{ \frac{2\|\mathbf{b}\|^2}{(\lambda_n + B_1)^3}, \frac{2B_1}{\rho^2} \right\}.
\end{equation}
Note that $B_1 > -\lambda_1$ and thus $(\lambda_n + B_1)^3 > 0$, which implies that  (\ref{bound_derivative_first}) makes sense. Combining with (\ref{bound_sigma_1}), (\ref{bound_error_term_first}) and (\ref{bound_derivative_first}), we have
\begin{equation*}
    |\sigma_1^{*} - \sigma^{*}| \leq \frac{2\|\mathbf{b}\|^2}{(\lambda_m - \lambda_1)^3} \cdot \min \left\{\frac{(\lambda_n+B_1)^3}{2\|\mathbf{b}\|^2}, \frac{\rho^2}{2B_1} \right\} \cdot \max_{m+1 \leq i \leq n} |\lambda_i - \mu|=:C_m \cdot \max_{m+1 \leq i \leq n} |\lambda_i - \mu|,
\end{equation*}
where the constant $C_m$ is bounded by $\frac{2\|\mathbf{b}\|^2}{(\lambda_m - \lambda_1)^3} \cdot \min \left\{\frac{(\lambda_n+B_1)^3}{2\|\mathbf{b}\|^2}, \frac{\rho^2}{2B_1} \right\} $.

\section{Proof of Theorem \ref{error_analysis_second}}
\label{appendix_proof_thm2}

The proof proceeds similarly as that of Theorem~\ref{error_analysis_first}. The secular equation (\ref{secular_equation}) can be decomposed into two parts:
\begin{equation}
    \begin{split}
        w(\sigma) & = \sum_{i=1}^{m} \frac{c_i^2}{(\lambda_i + \sigma)^2} + \sum_{i=m+1}^{n} \frac{c_i^2}{(\mu + \sigma)^2} -2\sum_{i=m+1}^{n} \frac{c_i^2 \cdot (\lambda_i - \mu)}{(\mu+\sigma)^3} - \frac{\sigma^2}{\rho^2} \\
        & \hspace{1em} + \sum_{i=m+1}^{n} \frac{c_i^2}{(\lambda_i + \sigma)^2}  - \sum_{i=m+1}^{n} \frac{c_i^2}{(\mu + \sigma)^2} + 2\sum_{i=m+1}^{n} \frac{c_i^2 \cdot (\lambda_i - \mu)}{(\mu+\sigma)^3}\\
        & =: w_2(\sigma;\mu) + e_2(\sigma;\mu),
    \end{split}
\end{equation}
where $e_2(\sigma;\mu)=w(\sigma) - w_2(\sigma;\mu)$ represents the error between two secular equations. By Taylor's Theorem, we have 
\begin{equation*}
        0 = w(\sigma^{*}) = w(\sigma_2^{*}) + \frac{d}{d \sigma}w(\xi) \cdot (\sigma^{*} - \sigma_2^{*})= w_1(\sigma_2^{*};\mu) + e_1(\sigma_2^{*};\mu) + \frac{d}{d \sigma}w(\xi) \cdot (\sigma^{*} - \sigma_2^{*}),
\end{equation*}
where $\xi \in (\sigma^{*}, \sigma_2^{*})$. By the definition that $w_2(\sigma_2^{*};\mu)=0$ and $w(\sigma)$ is monotonically decreasing in the domain (i.e., $\frac{d}{d\sigma}w(\varepsilon) \neq 0$), we have
\begin{equation}
\label{bound_sigma_2}
    |\sigma_2^{*} - \sigma^{*}| \leq \frac{|e_2(\sigma_2^{*};\mu)|}{\left|\frac{d}{d \sigma}w(\xi)\right|}.
\end{equation}

For the error term
\begin{equation*}
    e_2(\sigma;\mu) = \sum_{i=m+1}^{n} \frac{c_i^2}{(\lambda_i + \sigma)^2}  - \sum_{i=m+1}^{n} \frac{c_i^2}{(\mu + \sigma)^2} + 2\sum_{i=m+1}^{n} \frac{c_i^2 \cdot (\lambda_i - \mu)}{(\mu+\sigma)^3} = \sum_{i=m+1}^{n} \frac{3c_i^2}{(\varsigma_i + \sigma)^4} \cdot (\lambda_i - \mu)^2
\end{equation*}
for some $\varsigma_i \in (\mu, \lambda_i)$, $i=m+1, \cdots, n$. Due to $\mu>\lambda_{m}$, we have $\varsigma_i>\lambda_{m}$ and thus $\varsigma_i+\sigma>\lambda_{m} - \lambda_{1} > 0$. Therefore, the error term can be bounded by
\begin{equation}
\label{bound_error_term_second}
    |e_2(\sigma_2^{*};\mu)| \leq \frac{3\|\mathbf{b}\|^2}{(\lambda_m - \lambda_1)^4} \cdot \max_{m+1 \leq i \leq n} (\lambda_i - \mu)^2. 
\end{equation}
Due to the special choice of $\mu=\mu_2$ defined in (\ref{specific_choice_mu_second}), same upper bound for the derivative term is achieved as that in Theorem \ref{error_analysis_first}, i.e.,
\begin{equation}
\label{bound_derivative_second}
    \left| \frac{d}{d \sigma}w(\xi) \right| \geq \max \left\{ \frac{2\|\mathbf{b}\|^2}{(\lambda_n + B_1)^3}, \frac{2B_1}{\rho^2} \right\}.
\end{equation}
Combining with (\ref{bound_sigma_2}), (\ref{bound_error_term_second}) and (\ref{bound_derivative_second}), we have 
\begin{equation*}
    |\sigma_2^{*} - \sigma^{*}| \leq \frac{3\|\mathbf{b}\|^2}{(\lambda_m - \lambda_1)^4} \cdot \min \left\{\frac{(\lambda_d+B_1)^3}{2\|\mathbf{b}\|^2}, \frac{\rho^2}{2B_1} \right\} \cdot \max_{m+1 \leq i \leq n} (\lambda_i - \mu)^2=:C_m \cdot \max_{m+1 \leq i \leq n} (\lambda_i - \mu)^2.
\end{equation*}

\section{Random Gaussian Matrix}
\label{random_gaussian_matrix}
In this part, we further elaborate the error analysis in Theorem \ref{error_analysis_first} for random Gaussian matrix, where we can access the eigen distribution. We first introduce the definition for a class of Gaussian random matrix. 

\begin{definition}
A $n \times n$ matrix $\mathbf{M}$ with entries $m_{ij}$ is a Wigner matrix if
$m_{ij} = m_{ji}$ and $m_{ij}$ are randomly i.i.d. up to symmetry.
\end{definition}

\begin{definition}
Let $\mathbf{M}$  be a real Wigner matrix with diagonal entries $m_{ii} \sim \mathcal{N}(0,1)$ and $m_{ij} \sim \mathcal{N}(0,1/2)$, $i \neq j$. Then, $\mathbf{M}$ is said to belong to the Gaussian orthogonal ensemble (GOE).
\end{definition}

\textbf{Remark.} Consider a random $n \times n$ matrix $\mathbf{W}$ with all entries coming from $\mathcal{N}(0,1)$, then $\mathbf{M} = \frac{\mathbf{W} + \mathbf{W}^{\mathrm{T}}}{2}$ is a Gaussian orthogonal ensemble (GOE) matrix. 

\begin{theorem}
Suppose that the $n \times n$ matrix $\mathbf{A} = \frac{\widetilde{\mathbf{A}}}{\sqrt{2n}}$ with $\widetilde{\mathbf{A}}$ belonging to GOE, then with probability of $1-o(1)$, we have 
\begin{equation}
    \max_{m+1 \leq i \leq n} |\lambda_i - \mu| \leq \mathcal{O}\left(\left(1-\frac{m+1}{n}\right)^{2/3}\right),
\end{equation}
where $\mu \in [\lambda_{m+1}, \lambda_{n}]$ and $n$ is large enough. 
\end{theorem}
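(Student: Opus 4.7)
The plan is to reduce the bound on the maximum deviation to a bound on the spectral gap $\lambda_n - \lambda_{m+1}$ and then to analyze that gap through the edge behavior of the semicircle law. Since $\mu \in [\lambda_{m+1}, \lambda_n]$ by hypothesis, the triangle inequality gives
\begin{equation*}
\max_{m+1 \leq i \leq n} |\lambda_i - \mu| \leq \max\{\mu - \lambda_{m+1}, \lambda_n - \mu\} \leq \lambda_n - \lambda_{m+1},
\end{equation*}
so the problem is purely one of locating the two eigenvalues $\lambda_{m+1}$ and $\lambda_n$ of the scaled GOE matrix $\mathbf{A}$.

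Next I would invoke the Wigner semicircle law for $\mathbf{A} = \widetilde{\mathbf{A}}/\sqrt{2n}$: the empirical spectral distribution converges to the semicircle density $\rho(x) = \frac{2}{\pi}\sqrt{1 - x^2}$ supported on $[-1,1]$. Specifically, standard rigidity/concentration estimates for GOE eigenvalues give, with probability $1 - o(1)$, that $\lambda_n = 1 + o(1)$ and that $\lambda_{m+1}$ is asymptotically equal to the $(m+1)/n$ quantile $t^* = F^{-1}((m+1)/n)$ of the semicircle CDF $F(t) = \int_{-1}^{t} \rho(x)\,dx$. For the purposes of the $\mathcal{O}(\cdot)$ statement in the theorem, the weak convergence of the empirical distribution together with the edge concentration $|\lambda_n - 1| = o(1)$ with probability $1-o(1)$ is enough; the sharper Tracy–Widom fluctuations are not needed.

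The main analytical step is then to compute the quantile $t^*$ near the right edge. Writing $1 - x^2 = (1-x)(1+x)$, near $x = 1$ we have $\rho(x) \sim \frac{2\sqrt{2}}{\pi}\sqrt{1-x}$, and therefore
\begin{equation*}
1 - F(t) = \int_{t}^{1} \rho(x)\,dx \sim \frac{4\sqrt{2}}{3\pi}(1-t)^{3/2}
\end{equation*}
as $t \to 1^-$. Setting $1 - F(t^*) = 1 - (m+1)/n$ and solving for $1 - t^*$ yields
\begin{equation*}
1 - t^* \sim \left(\frac{3\pi}{4\sqrt{2}}\right)^{2/3}\left(1 - \frac{m+1}{n}\right)^{2/3}.
\end{equation*}
Combining this with $\lambda_n = 1 + o(1)$ and $\lambda_{m+1} = t^* + o(1)$ (both holding with probability $1-o(1)$) gives the desired bound on $\lambda_n - \lambda_{m+1}$, and hence on the maximum in the theorem, with the explicit constant $(3\pi/(4\sqrt{2}))^{2/3}$ matching equation~\eqref{error_bound_random_gaussian}.

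The main obstacle is the concentration step: rigorously asserting that the ordered eigenvalue $\lambda_{m+1}$ is close to the semicircle quantile $t^*$ uniformly in the relevant range of $m$. This requires either a global eigenvalue rigidity result (which, for GOE, is well documented in random matrix theory literature) or a simpler Glivenko–Cantelli-type argument using the convergence of the empirical spectral distribution together with monotonicity. Since the theorem only claims an $\mathcal{O}$-bound with probability $1-o(1)$, the weaker approach suffices; but one must be careful that the $o(1)$ error in locating $\lambda_{m+1}$ does not dominate the $(1 - (m+1)/n)^{2/3}$ term, which is automatic provided $1 - (m+1)/n$ stays bounded away from $o(1)$ (otherwise the bound becomes vacuous anyway, since the right-hand side is $\mathcal{O}(1)$).
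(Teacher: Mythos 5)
Your proposal follows essentially the same route as the paper's own proof: bound the maximum by $\lambda_n - \lambda_{m+1}$ using $\mu \in [\lambda_{m+1},\lambda_n]$, invoke the Wigner semicircle law, and extract the quantile asymptotics near the right edge via $1-F(t) \sim \frac{4\sqrt{2}}{3\pi}(1-t)^{3/2}$, yielding the same constant $\left(\frac{3\pi}{4\sqrt{2}}\right)^{2/3}$. If anything, you are somewhat more explicit than the paper about the eigenvalue-location (rigidity/Glivenko--Cantelli) step that justifies replacing $\lambda_{m+1}$ and $\lambda_n$ by their limiting quantiles, which the paper treats informally.
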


\begin{proof}
According to Section~1.4 in \cite{forrester2010log}, eigenvalues $\{\lambda_{1}, \cdots, \lambda_{n}\}$ of $\mathbf{A}$ follows the Wigner’s semicircle law that p.d.f. of $\lambda \in \{\lambda_{1}, \cdots, \lambda_{n}\}$ satisfies
\begin{equation*}
    \lim_{n \to \infty} f_{n}(\lambda) = f(\lambda) = \frac{2}{\pi} \cdot \sqrt{1-\lambda^2} \cdot \mathbb{I}_{[-1,1]}(\lambda),
\end{equation*}
where $\mathbb{I}_{[-1,1]}(\lambda)$ is the indicator function that values $1$ if $\lambda \in [-1,1]$ and $0$ if $\lambda \notin [-1,1]$. Therefore,
\begin{equation*}
    \begin{split}
        P(\lambda^{*} \leq \lambda \leq \lambda_{m+1}) & = P(\lambda \leq \lambda_{m+1}) - P(\lambda \geq \lambda^{*})\\
        & \approx \frac{m+1}{n} - \int_{-1}^{\lambda^{*}} f(s) \hspace{0.5em} ds\\
        & = 1 - \frac{2}{\pi}\int_{-1}^{\lambda^{*}} \sqrt{1-s^2} \hspace{0.5em} ds -\left( 1- \frac{m+1}{n}\right).
    \end{split}
\end{equation*}
Furthermore,
\begin{equation*}
    \begin{split}
        1 - \frac{2}{\pi}\int_{-1}^{\lambda^{*}} \sqrt{1-s^2} \hspace{0.5em} ds & = \frac{1}{\pi} \cdot \left(\frac{\pi}{2} - \arcsin (\lambda^{*}) - \lambda^{*} \cos (\arcsin (\lambda^{*})) \right)\\
        & \approx \frac{4\sqrt{2}}{3\pi} \cdot  (1-\lambda^{*})^{3/2}\\
        & = \mathcal{O}\left((1-\lambda^{*})^{3/2}\right),
    \end{split}
\end{equation*}
where the last equation comes from $\frac{\pi}{2} - \arcsin(\lambda^{*}) \approx \sqrt{2(1-\lambda^{*})} = \mathcal{O}\left(\sqrt{1-\lambda^{*}}\right)$ when $\lambda^{*} \to 1^{-}$, and we may select $\lambda^{*}$ with $1-\lambda^{*} = \mathcal{O}\left(\left(1-\frac{m+1}{n}\right)^{2/3}\right) \approx \left(\frac{3\pi}{4\sqrt{2}}\right)^{2/3} \cdot \left(1-\frac{m+1}{n}\right)^{2/3}$ such that $P(\lambda^{*} \leq \lambda \leq \lambda_{m+1}) > 0$. Therefore, with probability of $1-o(1)$, we have $\lambda^{*} \leq \lambda_{m+1} \leq \mu \leq \lambda_{n} \leq 1$ and thus
\begin{equation*}
    \max_{m+1 \leq i \leq n} |\lambda_i - \mu| \leq \lambda_{n} - \lambda_{m+1} \leq 1 - \lambda^{*} = \mathcal{O}\left(\left(1-\frac{m+1}{n}\right)^{2/3}\right) \approx \left(\frac{3\pi}{4\sqrt{2}}\right)^{2/3} \cdot \left(1-\frac{m+1}{n}\right)^{2/3}.
\end{equation*}
\end{proof}

\textbf{Remark.} The above theorem also holds for other kinds of symmetric random Gaussian matrix, e.g., the Gaussian unitary ensemble (GUE) matrix and the Gaussian symplectic ensemble (GSE) matrix etc., because eigen distribution for both GUE and GSE matrix follow the Wigner’s semicircle law \cite{forrester2010log}.

\section{Reduction to Diagonal Hessian Matrix}
\label{appendix_equivalence_cubic_problems}
For the cubic regularized non-convex quadratic problem
\begin{equation}
    \label{cubic_minimization2}
    \min_{\mathbf{x}} f_{\mathbf{A},\mathbf{b},\rho}(\mathbf{x}):= \mathbf{b}^{\mathrm{T}}\mathbf{x}+ \frac{1}{2} \mathbf{x}^{\mathrm{T}}\mathbf{A}\mathbf{x} + \frac{\rho}{3}\|\mathbf{x}\|^3,
\end{equation}
the Hessian matrix $\mathbf{A}$ is diagonalizable, i.e., there exist a diagonal matrix $\mathbf{\Lambda} \in \mathbb{R}^{n \times n} = \text{diag}(\lambda_1, \cdots, \lambda_n)$ and a unitary matrix $\mathbf{V} \in \mathbf{R}^{n \times n}$, such that
\begin{equation*}
    \mathbf{A} = \mathbf{V} \mathbf{\Lambda} \mathbf{V}^{T}.
\end{equation*}
Then (\ref{cubic_minimization2}) is equivalent to the following problem (\ref{cubic_minimization3}) with diagonal Hessian matrix $\mathbf{\Lambda}$:
\begin{equation}
    \label{cubic_minimization3}
    \min_{\mathbf{y}} f_{\mathbf{\Lambda},\mathbf{c},\rho}(\mathbf{x}):= \mathbf{c}^{\mathrm{T}}\mathbf{y}+ \frac{1}{2} \mathbf{y}^{\mathrm{T}}\mathbf{\Lambda}\mathbf{y} + \frac{\rho}{3}\|\mathbf{y}\|^3,
\end{equation}
where $\mathbf{c} = \mathbf{V}^{\mathrm{T}} \mathbf{b}$. Therefore, we adopt diagonal matrix $\mathbf{A}$ in numerical experiments since these two problems are equivalent.

\section{Further Details of Experiments}
\label{experimental_details}
\subsection{The Cauchy Point}
The Cauchy radius \cite{conn2000trust} for the cubic regularized quadratic problem
\begin{equation*}
    \min_{\mathbf{x}} f_{\mathbf{A},\mathbf{b},\rho}(\mathbf{x}):= \mathbf{b}^{\mathrm{T}}\mathbf{x}+ \frac{1}{2} \mathbf{x}^{\mathrm{T}}\mathbf{A}\mathbf{x} + \frac{\rho}{3}\|\mathbf{x}\|^3
\end{equation*}
is the solution of
\begin{equation*}
    r = {\arg \min}_{\zeta \in \mathbb{R}} f_{\mathbf{A},\mathbf{b},\rho}(-\zeta \cdot \mathbf{b}/\|\mathbf{b}\|),
\end{equation*}
and the Cauthy point is defined as
\begin{equation*}
    \mathbf{x}_{c} = - r \cdot \mathbf{b}/\|\mathbf{b}\|.
\end{equation*}
Note that $f_{\mathbf{A},\mathbf{b},\rho}(\mathbf{x}_{c}) \leq f_{\mathbf{A},\mathbf{b},\rho}(\mathbf{0}) = 0$. 

\subsection{The ARC Algorithm}
\label{experimental_details_arc}
We present the ARC algorithm \cite{cartis2011adaptive} (Algorithm \ref{alg_arc}) here for completeness and clarification.

\begin{algorithm}
    \caption{The ARC algorithm}
    \label{alg_arc}
    \textbf{Target:} a local minimizer of the objective function $f(\mathbf{x})$, where $\mathbf{x} \in \mathbb{R}^{n}$.\\
    \textbf{Input:} initial guess $\mathbf{x}_{0}$, $\gamma_{2} \geq \gamma_{1}>1$, $1>\eta_{2} \geq \eta_{1}>0$, $\rho_{0}>0$ and the integer $T>0$.\\
    \textbf{Output:} the approximate solution $\mathbf{x}_{\text{out}} \in \mathbb{R}^{n}$.
    \begin{algorithmic}[1] 
    \FOR{$t = 0, \cdots, T$}
    \STATE load $\mathbf{b}_{t}=\nabla f(\mathbf{x}_{t})$, $\mathbf{A}_{t} = \nabla^2 f(\mathbf{x}_{t})$
    \STATE compute the Cauchy point $\mathbf{s}_{c}$ of $f_{\mathbf{A}_{t},\mathbf{b}_{t},\rho_{t}}(\mathbf{s}):= \mathbf{b}_{t}^{\mathrm{T}}\mathbf{s}+ \frac{1}{2} \mathbf{s}^{\mathrm{T}}\mathbf{A}_{t}\mathbf{s} + \frac{\rho_{t}}{3}\|\mathbf{s}\|^3$
    \STATE solve $f_{\mathbf{A}_{t},\mathbf{b}_{t},\rho_{t}}(\mathbf{s})$ by the gradient descent, the Krylov subspace or the ASEM method, the approximate solution is denoted as $\mathbf{s}_{t}$
    \IF{$f_{\mathbf{A}_{t},\mathbf{b}_{t},\rho_{t}}(\mathbf{s}_{c}) \leq f_{\mathbf{A}_{t},\mathbf{b}_{t},\rho_{t}}(\mathbf{s}_{t})$}
    \STATE $\mathbf{s}_{t} = \mathbf{s}_{c}$;
    \ENDIF
    \STATE compute 
    $$
    \kappa_{t} = \frac{f(\mathbf{x}_{t}) - f(\mathbf{x}_{t} + \mathbf{s}_{t})}{-f_{\mathbf{A}_{t},\mathbf{b}_{t},\rho_{t}}(\mathbf{s}_{t})};
    $$
    \STATE set 
    $$
    \mathbf{x}_{t+1}=\left\{\begin{array}{ll}
    \mathbf{x}_{t}+\mathbf{s}_{t} & \text { if } \kappa_{t} \geq \eta_{1} \\
    \mathbf{x}_{t} & \text { otherwise }
    \end{array}\right.
    $$
    \STATE set 
    $$
    \rho_{t+1} \in\left\{\begin{array}{lll}
    \left(0, \rho_{t}\right] & \text { if } \kappa_{t}>\eta_{2} & \text { (very successful iteration) } \\
    {\left[\rho_{t}, \gamma_{1} \rho_{t}\right]} & \text { if } \eta_{1} \leq \kappa_{t} \leq \eta_{2} & \text { (successful iteration) } \\
    {\left[\gamma_{1} \rho_{t}, \gamma_{2} \rho_{t}\right]} & \text { otherwise } & \text { (unsuccessful iteration) }
    \end{array}\right.
    $$
    
    \ENDFOR
    \end{algorithmic}
\end{algorithm}

\subsection{Hyperparameters}
In this subsection, we provide hyperparameters of the ARC algorithm in solving four optimization problems (e.g., TOINTGSS, BRYBAND, DIXMAANG and TQUARTIC) from the CUTEst library. In all experiments, we set $\gamma_{1} = \gamma_{2} = 2$, $\eta_{1} = 0.1$, $\eta_{2}=0.9$ and $\rho_{0}=10^{3}$. The initial guess $\mathbf{x}_{0}$ is available for each problem (you can access it by \textit{prob.x} command in MATLAB). For TOINTGSS, BRYBAND, DIXMAANG and TQUARTIC, we select the top $1000$, $2000$, $3000$ and $5000$ variables as unknown variables for optimization, respectively.

\subsection{Additional Experiments}
\textbf{Experiment 6.} The effect of the length $\|\mathbf{b}\|$. The derived upper bound in (\ref{error_bound_first}) and (\ref{error_bound_second}) are proportional to $\|\mathbf{b}\|^{3/2}$, which implies that larger $\|\mathbf{b}\|$ may cause worse approximation for $\sigma^{*}$ and thus the unattainable optimality. Here, we tested the convergence of the proposed method for different length $\|\mathbf{b}\|$ with other parameters fixed. In case 1 to 6, we set $\|\mathbf{b}\| \in \{1, 0.5, 0.2, 0.1, 0.05, 0.01\}$. The vector $\mathbf{b}$ is proportional to the vector $[1, \cdots, 1]^{\mathrm{T}}$. Eigenvalues of the matrix $\mathbf{A}$ are evenly spaced in $[-1,1]$. The remaining parameters are $n = 5 \times 10^{3}$ and $\rho = 0.1$. We adopt the first-order ASEM (i.e., $\mu$ is defined in (\ref{mu_first})) here. As is shown in Figure \ref{num_eigen_b}, the proposed method is robust with respect to the length $\|\mathbf{b}\|$, while the gradient descent \cite{carmon2019gradient} is ill-conditioned if $\|\mathbf{b}\|$ is 
tiny. Consistent with the error analysis, the proposed method converges faster for smaller length $\|\mathbf{b}\|$.

\begin{figure}[h]
    \centering
    \includegraphics[width= 0.8\textwidth]{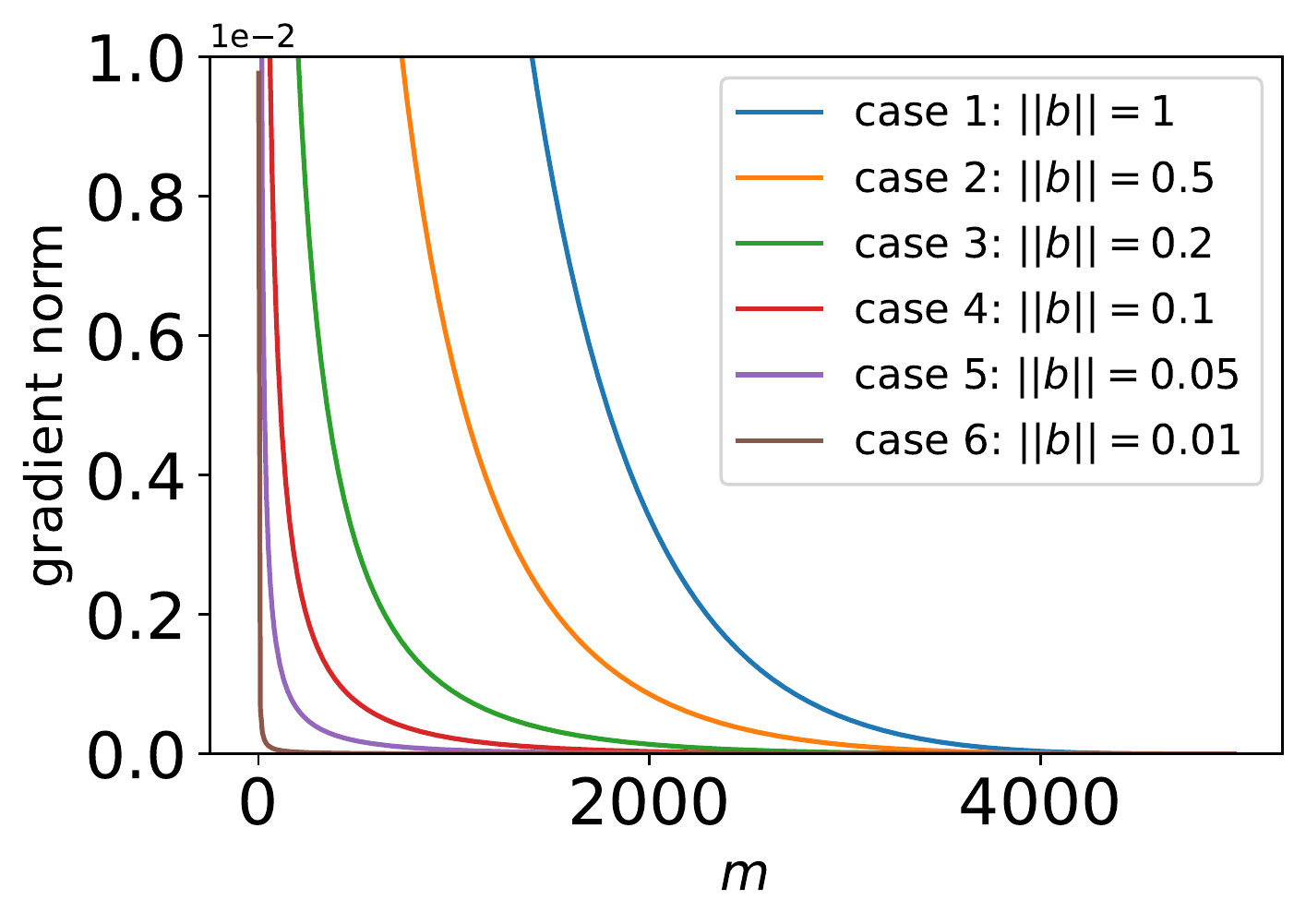}
    \caption{Trajectories of suboptimality (gradient norm $\|\nabla f_{\mathbf{A},\mathbf{b},\rho}(\mathbf{x})\|$) with different length $\|\mathbf{b}\|$ in Experiment 6.}
    \label{num_eigen_b}
\end{figure}

\end{document}